\newtheorem{theorem}{Theorem}
\newtheorem{lemma}[theorem]{Lemma}
\newtheorem{observation}[theorem]{Observation}
\newtheorem{claim}[theorem]{Claim}
\newtheorem*{claim*}{Claim}
\newtheorem{step}{Step}
\newtheorem*{step*}{Step}
\theoremstyle{remark}
\newtheorem*{remark}{Remark}
\newcommand{\Prob}{Pr}
\newcommand{\A}{\ensuremath{\mathcal A}}
\newcommand{\headrule}{HeadRule}
\DeclareMathOperator{\reach}{reach}
\begin{document}
\begin{titlepage}

\begin{center}
{\huge Upper and Lower Bounds on \\ Long Dual-Paths in Line Arrangements}\\[0cm]
\end{center}

\vfill

\begin{minipage}[c]{0.3\textwidth}%
  \begin{center}
  \textsc{Udo Hoffmann}
  \\ [0.1cm]
  {\small Technische Universität Berlin}\\
  {\small Berlin, Germany \\}
  {\small \texttt{uhoffman@math.tu-berlin.de} \\[0.1cm] }
  {\tiny {Supported by the Deutsche Forschungsgemeinschaft
within the research training group 'Methods for Discrete Structures' (GRK
1408).} \\[0.4cm] }
  \par\end{center}%
\end{minipage}
\begin{minipage}[c]{0.3\textwidth}%
  \begin{center}
  \textsc{Linda Kleist}\\ [0.1cm]
  {\small Technische Universität Berlin}\\
  {\small Berlin, Germany \\ }
  {\small \texttt{kleist@math.tu-berlin.de} \\[0.4cm] }
  \par\end{center}%
\end{minipage}
\begin{minipage}[c]{0.3\textwidth}%
  \begin{center}
  \textsc{Tillmann Miltzow}\\ [0.1cm]
  {\small Freie Universit\"at Berlin \\ }
  {\small Berlin, Germany \\ }
  {\small \texttt{t.miltzow@gmail.com} \\[0.1cm] }
  {\tiny{supported by the ERC grant PARAMTIGHT: Parameterized complexity and
the search
for tight complexity results", no. 280152.} \\[0.4cm] }
  
  \par\end{center}%
\end{minipage}

\vfill

\begin{figure}[htbp]
  \centering
  \includegraphics[width = .8\textwidth]{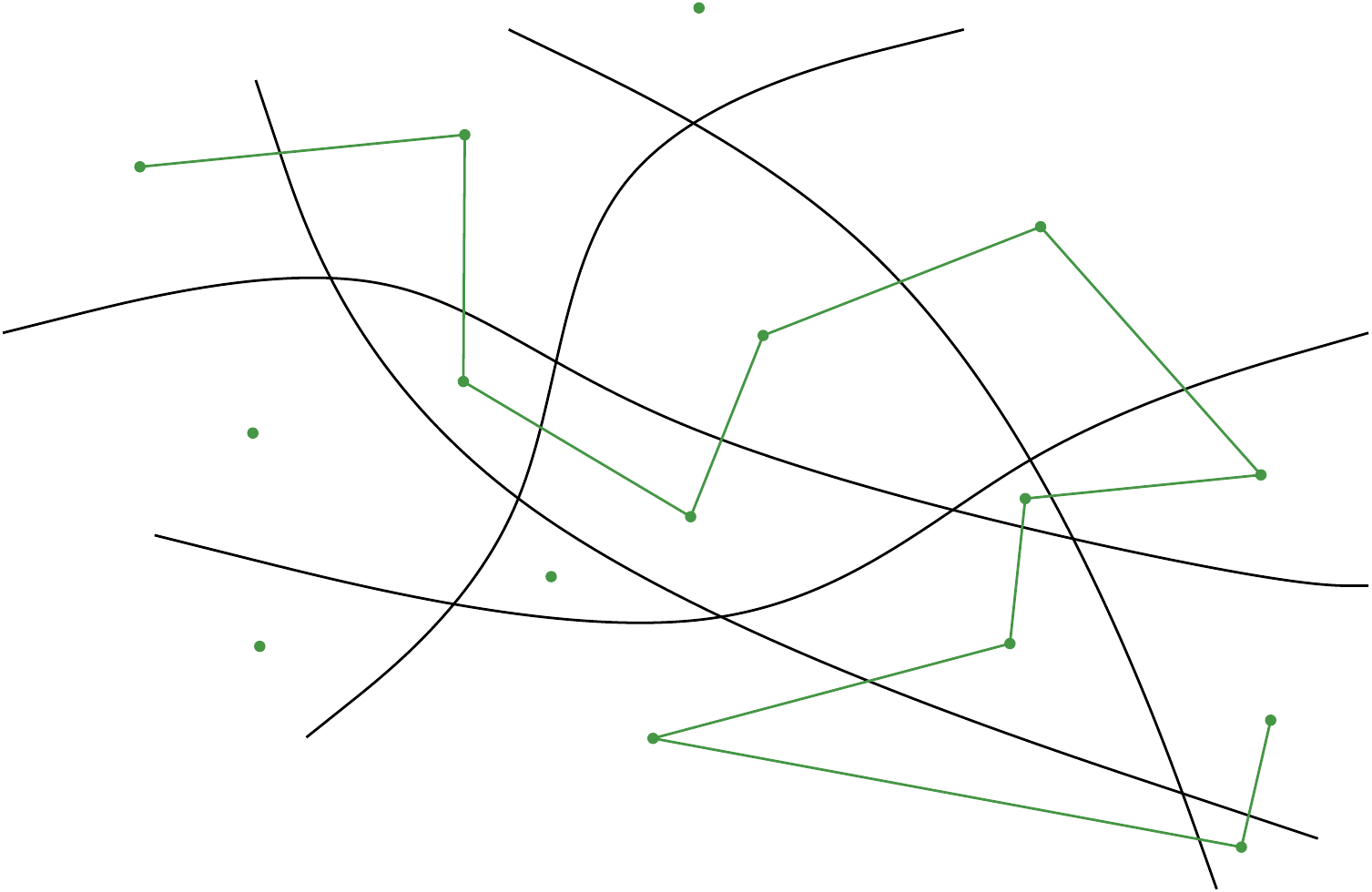}\\[1cm]
%  \includegraphics[width = .7\textwidth]{title1}
  %\caption{Long path in a line arrangmen}\label{fig:title}
\end{figure}

\vfill

\begin{abstract}
Given a line arrangement $\A$ with $n$ lines, we show that there
exists a path of length $n^2/3 - O(n)$ in the dual graph of \A \ formed by its faces.
This bound is tight up to lower order terms.
For the bicolored version, we describe an example of a line
arrangement with $3k$ blue and $2k$ red lines with no alternating
path longer than $14k$.
Further, we show that any line arrangement with $n$ lines has a
coloring such that it has an alternating path of length $\Omega (n^2/ \log n)$.
Our results also hold for pseudoline arrangements.
\end{abstract}

\vfill

\end{titlepage}

\section{Introduction}

A \emph{line arrangement} $\mathcal A$ is a set of lines in the Euclidean plane. The set of lines partitions the plane into faces, edges and vertices and thus defines a plane graph $G(\mathcal A)$.
A line arrangement is called \emph{simple} if no two lines are parallel and every point of the Euclidean plane is covered by at most 2 lines. That is, every two lines intersect exactly once and every vertex of $G(\mathcal A)$ has degree 4.
A \emph{(dual) path} in a line arrangement is a sequence of faces such that
consecutive faces share an edge and no face appears more than once.
Alternatively, such a path can be seen as a simple path in the dual graph $G^*(\mathcal A)$ of the plane graph $G(\mathcal A)$, where the faces of the arrangement are the vertices and two faces of the arrangement are adjacent when they share an edge. 

We are interested in the length of a longest path of a simple arrangement.
In particular, we want to bound the length of the longest path in the set of arrangements with $n$ lines: Given the set $\mathbf A_n$ of arrangements with $n$ lines, we are interested in bounds on the function 
$$\displaystyle f(n):=\min_{\mathcal A\in\mathbf A_n} \max_{P\in \mathcal A} {|P|}$$
where $P$ is a dual path in $\mathcal A$ and $|P|$ its length.\\

This question has a colored counterpart. If we color the lines of a line arrangement with either red or blue (such that each color appears at least once), we talk about a \emph{bicolored line arrangement}. A dual path in a bicolored arrangement is \emph{alternating} if no two consecutive edges which certify the contact of the faces are of the same color.
For the bicolored version, we are interested in the length of a longest alternating path of a simple arrangement. We are particularly interested in the case when the number of red lines and the number of blue lines are roughly the same.

\subsection{State of the Art}

Aichholzer \emph{et al.}~\cite{cellpaths} introduced the problems.  To the best of our knowledge, no one else studied these questions previously.

Let us briefly repeat the current state of art, that is results in~\cite{cellpaths}:  
For the uncolored case, they show that any line arrangement has a dual path of
length $\tfrac{1}{4}n^2 - O(n)$. They have two ideas. The first is based on
Tutte's famous result that every $4$-connected planar graph admits a Hamilton
cycle~\cite{Tutte}. 
By some local transformations, they manage to transform the plane dual 
graph $G^*(\mathcal A)$ to a plane $4$-connected graph $G'$ such 
that the Hamilton path of $G'$ can be translated 
to one of $G^*(\mathcal A)$. 
Their second idea is to use the levels of the line arrangement
in order to construct a long path in a straight-forward manner. 
This is an idea which we also pursue and therefore come back to 
in more detail.
For the upper bound, they argue that a path of length 
$\tfrac{1}{3}n^2 + O(n)$ is the best possible lower bound 
for all arrangements with $n$ lines. Note that this is roughly 
$2/3$ of the faces. We will repeat and refine their argument 
in Section~\ref{sec:mono}.

In a bicolored line arrangement, a face is called \emph{bicolored} 
if the lines bounding it are not all of the same color. They show 
that the graph induced by the bicolored faces is 
(alternating-)connected, i.e. there exists an alternating dual path between any two bicolored faces. This result implies the existence of an alternating path of length $\Omega(n)$ in any bicolored arrangement with $n$ lines. This is due to the fact that each bicolored arrangement has two unbounded bicolored faces of distance at least $n$. Thus, any alternating path connecting them has length $\Omega(n)$. 
They also give arrangements with a linear upper bound: Consider a bicolored line arrangement  with $n-1$ red lines and $1$ blue line. Then, only $n$ blue edges exist and any alternating path has length in $O(n)$. 

% They work with the concept of \emph{line arrangements}, even though all results hold in the more general setting of \emph{pseudoline arrangements}. The same holds for our results. Thats why we state them for pseudoline arrangements. Definitions and further properties can be found in the preliminairies.

\subsection{Results and outline}
%In the remainder of the Introduction, we repeat some necessary definitions in Subsection~\ref{sec:def}.
Our results hold not only for \emph{line arrangements}, but also in the more general setting of \emph{pseudoline arrangements}. Definitions and further properties can be found in the preliminaries.

Our main result is a (up to lower order terms) tight lower bound on the length of a longest path. Section~\ref{sec:mono} is devoted to its proof.

\begin{restatable}[Uncolored Arrangement]{theorem}{mono}\label{thm:mono}
 In a simple arrangement of $n$ pseudolines in the Euclidean plane, there exist a path of length $\frac{1}{3}n^2-O(n)$.
 Indeed, there are exponentially many paths of lengt~$\Omega(n^2)$.
\end{restatable}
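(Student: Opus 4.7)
My plan is to construct a long dual path by combining sub-paths that follow the \emph{levels} of the arrangement, an idea already used by Aichholzer et al.\ to obtain the bound $\tfrac14 n^2$ and which I would push to $\tfrac13 n^2$ by a more careful stitching.  After a projective transformation (legal for pseudoline arrangements) the $n$ pseudolines become $x$-monotone; label them $\ell_1,\dots,\ell_n$ by their order from top to bottom at $x=-\infty$.  For each face $f$, define its \emph{level} $\mathrm{lev}(f)\in\{0,\dots,n\}$ as the number of pseudolines strictly above $f$.

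First I would analyze each level individually.  For a fixed $k$, the faces of level $k$ taken in left-to-right order form a dual path $P_k$: consecutive level-$k$ faces share a single edge on one of the two pseudolines whose crossing separates them.  This uses the fact that in a simple arrangement the level-$k$ face changes only at a crossing of the pseudolines currently in the $k$-th and $(k{+}1)$-st positions from the top, and at such a crossing the two successive level-$k$ faces are edge-adjacent.  Summing over all levels recovers the total face count $\binom{n}{2}+n+1$, so the average level contains roughly $n/2$ faces.

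Next I would stitch the $P_k$'s together.  At any $x_0$ I can descend from a level-$k$ face to the level-$(k{+}1)$ face beneath it by crossing the pseudoline separating them, provided $x_0$ avoids crossings involving the pseudolines currently in positions $k{+}1$ or $k{+}2$; such an $x_0$ is available in most of each level's domain.  Traversing the $P_k$ alternately left-to-right and right-to-left and choosing the transition points near the ends, each transition costs only $O(1)$ faces, so the total loss over a range of levels is $O(n)$.

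To reach the constant $\tfrac13$, I would optimize over which levels to include and where to switch.  The naive chaining forces us to enter and exit each $P_k$ at its two extreme faces, committing a constant fraction of each level's faces to ``wiring''; a finer zig-zag scheme (dropping down by two levels at a time in some intervals and weaving back up) recovers these faces and leads, after a direct accounting, to length $\tfrac{1}{3}n^2-O(n)$.  For the ``exponentially many paths'' claim I would observe that at each of the $\Theta(n)$ transitions there are $\Theta(n)$ admissible switching points, and independent choices yield $2^{\Omega(n)}$ pairwise distinct paths of length $\Omega(n^2)$.

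The main obstacle is precisely the last quantitative step: the level framework delivers $\tfrac14 n^2$ essentially for free, and squeezing out the extra $\tfrac{1}{12}n^2$ requires identifying a traversal pattern that does not waste the two endpoint faces of every level path.  I would expect to handle a thin band of $O(1)$ levels near the top and bottom separately, since those levels contain too few faces to be worth chaining; this accounts for the $O(n)$ additive loss in the final bound.
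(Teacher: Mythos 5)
There is a fundamental error at the base of your construction: faces of a single level are \emph{never} adjacent in the dual graph, so the level-paths $P_k$ you describe do not exist. Every edge of the arrangement lies on some pseudoline $\ell$, and $\ell$ is counted as strictly below the face on one side of the edge but not below the face on the other side; hence any two edge-adjacent faces have levels differing by exactly one. In other words, the dual graph is bipartite with bipartition classes given by level parity, and each level $L_k$ is an independent set in $G^*(\mathcal A)$. Two consecutive level-$k$ faces in left-to-right order share only the crossing vertex where the strip is pinched, not an edge. This is precisely why the paper works with \emph{tunnels} $\mathcal T_i = L_{2i}\cup L_{2i+1}$ of width two: the dual graph restricted to a tunnel is a caterpillar (Observation~\ref{obs:strucProp}\ref{itm:backbone}), and one routes along its backbone, the good faces that alternately touch the top and bottom wall. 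The faces your argument would not visit -- the caterpillar's leaves, called bad faces in the paper -- are exactly the ones that have to be recovered to go beyond $\tfrac14 n^2$.

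Even granting the corrected tunnel framework, your proposal stops where the actual work begins. You write that ``a finer zig-zag scheme \ldots recovers these faces and leads, after a direct accounting, to length $\tfrac13 n^2 - O(n)$,'' but no zig-zag within the tunnel structure recovers the leaves in general: a bad face has a unique tunnel neighbor, so it can only be picked up by a local detour, and one must control how many such detours are possible and how many bad faces must be abandoned. The paper's proof of this step is the bulk of the argument: each untraversed bounded bad face carries two units of charge, the charge is discharged through its leftmost wall edge and the adjacent vertex, two rounds of rerouting (Steps~1 and~2) eliminate most double-charged situations and re-send the remaining charge to unbounded faces, and Observations~\ref{obs:nested}, \ref{obs:strucProp2}, \ref{obs:rightSend}, and~\ref{obs:2charge} are needed to show the reroutings are well-defined and that no traversed bounded face ends up with more than one unit. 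Your ``direct accounting'' is exactly the part that is not direct. For the exponential count, the paper uses the simple observation that the $2n$ unbounded faces induce a cycle, so any subset of roughly $n/4$ of the $n/2$ tunnel backbones can be glued into a single path of length $\Omega(n^2)$, giving $\binom{\lfloor n/2\rfloor}{\lfloor n/4\rfloor}$ paths; your ``$\Theta(n)$ admissible switching points at each transition'' is a different claim and would need its own proof that the resulting paths are pairwise distinct and all long, which you do not supply.
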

 
 The idea of the proof consists of three steps.
 In the first step, we define some tunnels
 using the level structure of the arrangement.
 This already gives a path of length $\frac{1}{4}n^2-O(n)$.
 In the second step, we reroute this path to incorporate
 unused faces in a sophisticated way.
 In the last step, we will show, using some 
 carefully designed charging scheme,
 that we indeed used roughly $2/3$ of all the faces and
 this finishes the proof. The second and the third step 
 are done together as our charging scheme also helps us to 
 define the rerouting.
 The proof is fairly technical, because we have to guarantee 
 that the reroutings are indeed possible, without reusing a face.

\vspace{0.1cm}

For the bicolored case, we were able to improve the upper bound, to a more balanced scenario.
\begin{restatable}[Bicolored Arrangement]{theorem}{upbi}\label{thm:upbi}
 There exists a simple arrangement of $3k$ red and $2k$ blue lines where any alternating dual path has length of at most $14k$, for every odd $k$.
\end{restatable}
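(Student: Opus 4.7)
The plan is to construct a simple line arrangement $\mathcal{A}$ with $3k$ red and $2k$ blue lines for which every alternating dual path has length at most $14k$. For the construction I would aim at a \emph{clustered} arrangement, in which the $5k$ lines are partitioned into $k$ groups of five (three red, two blue per group), with each group concentrated in its own small region of the plane. A natural realization is to take $k$ near-pencils whose apex points lie on a circle at angular positions $2\pi i / k$; odd $k$ prevents anti-parallel pencil directions, and a generic perturbation then yields a simple arrangement with no three concurrent lines and no two parallel lines. Each blue (respectively red) line then has a natural home group, and intersections are classified as within-group or cross-group.

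The heart of the argument is a \emph{local bound}: at most $14$ edges of any alternating dual path can be charged to a single group. A natural charging rule assigns each edge of the alternating path to the group containing the line underlying that edge. Within one group only two blue lines are available, so the alternation constraint combined with the no-face-repeat condition forces only a constant number of blue-edge uses whose underlying blue line is in that group, and symmetrically for the three red lines. The constant $14$ is to emerge from a case analysis of how an alternating path can enter, pass through, and exit the local zone of a single group, taking into account that an interior face of an alternating path must be bichromatic and that each face of the arrangement is used at most once. Summing the local bound over the $k$ groups then yields the target length $14k$.

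The main obstacle is the local bound itself. An alternating path may visit the local zone of a single group several times through different entry points, and each such visit is coupled to traversals of neighboring groups via the long extensions of lines across the entire arrangement. Controlling this coupling and pinning down the tight constant $14$, rather than a larger one, will require choosing the pencil geometry and the internal coloring within each group very carefully; the particular $3$-to-$2$ split of red and blue lines per group and the parity condition on $k$ are presumably dictated by this analysis, and verifying that the decoupling between groups is actually realized by a concrete simple arrangement (rather than merely provable in principle) is what makes the construction delicate.
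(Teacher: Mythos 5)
Your construction is fundamentally different from the paper's, and the key step---the local bound of $14$ edges per group---is not established and, as far as I can see, would fail for clustered near-pencils. The problem is that the lines of a group are genuinely global objects: far from the apex of group $i$, its two blue lines fan out and cross lines from every other group, producing $\Theta(k)$ edges on each blue line. Nothing in the near-pencil geometry prevents an alternating path from zig-zagging back and forth across a single blue line at many of these far-away crossings, each time passing through a fresh face; alternation only requires a red edge between consecutive blue edges, and red edges are abundant everywhere. So the charging rule that assigns each path edge to the group of the crossed line can send unboundedly many edges to one group, and summing a ``local bound'' over groups never gets off the ground. You acknowledge this coupling problem in your last paragraph but do not resolve it, and resolving it is exactly where the content of the theorem lies.

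What actually makes the bound work in the paper is a completely different mechanism: the red arrangement is the extension of the sides of a regular $3k$-gon (together with $3k$ auxiliary dotted lines, this is projectively the B\"or\"oczky configuration), and the blue lines are inserted in nearly-parallel \emph{twin} pairs that live entirely inside thin \emph{marked slabs} between the $3k$-gon and infinity, with every third slab marked. The regular-polygon structure guarantees that whenever a red line crosses a dotted line it simultaneously crosses another red line, which forces large all-red (monochromatic) faces to separate any two marked slabs and to fill a ``middle part'' inside the polygon. Since an alternating dual path can pass through at most two monochromatic faces, these red separators act as walls: an alternating path can enter at most two marked slabs and can traverse the middle part only once. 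The twin pairing then forbids direction reversal inside a slab (each twin pair pins down a unique face), bounding a slab traversal by $3k$ faces, and inside the middle part each blue line is crossed at most twice, bounding that traversal by $8k$. This gives $2\cdot 3k + 8k = 14k$. In short, the constant $14$ comes from explicit monochromatic-face barriers built into the geometry, not from a per-group counting argument, and a construction without such barriers cannot be expected to yield a linear bound.
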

The idea of this construction is to distinguish between 
\emph{ monochromatic faces} and \emph{bicolored} faces, 
depending on whether
the lines bounding it have all the same color or not.
Note that an alternating path must not traverse through
a monochromatic face.
We use monochromatic faces in order to separate the bicolored
ones in a way that the graph induced by the bicolored faces 
is similar to a star with a cycle instead of a center vertex.
Section~\ref{sec:UBE} explains the example in detail.

\vspace{0.1cm}

Section~\ref{sec:ran} uses some ideas of the uncolored case 
to show that a random coloring of any pseudoline arrangement 
has an alternating dual-path of length $\Omega(n^2/\log n)$ 
with high probability. This implies that there is a 
coloring for any pseudoline arrangement such that it allows 
for an alternating path of this length.

\begin{restatable}[Random Bicolorings]{theorem}{RC}\label{thm:probLongPath}
    In a random bicoloring of an arrangement of $n$ pseudolines, there
    exists an alternating path of length
    $\Omega \left( \frac{n^2}{\log n} \right)$ with high probability.
\end{restatable}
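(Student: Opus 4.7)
The plan is to adapt the level-based tunnel construction of Theorem~\ref{thm:mono} to the bicolored setting, exploiting the fact that consecutive edges along a tunnel lie on distinct pseudolines, so in a random bicoloring the colors seen along each tunnel form an i.i.d.\ uniform binary string. First, I would extract from the proof of Theorem~\ref{thm:mono} a family of $\Theta(n)$ tunnels of length $\Theta(n)$ each. Then I would subsample this family, keeping only every $\Theta(\log n)$-th tunnel, to leave $\Theta(n/\log n)$ active tunnels separated by buffer zones of $\Theta(\log n)$ unused tunnels. These buffer zones will later accommodate local detours without interference across different active tunnels.

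On each active tunnel I would construct an alternating dual subpath of length $\Omega(n)$. I walk along the tunnel; whenever the next edge would break alternation (an event of probability $\tfrac{1}{2}$ at each position, independent across positions thanks to the distinct-pseudolines property), I reroute through a short detour into an adjacent unused tunnel via a correctly-colored crossing edge. Because several candidate detour edges are available at each step and each has the correct color with probability $\tfrac{1}{2}$, a Chernoff concentration argument ensures success at essentially all positions with high probability, yielding an alternating subpath of length $\Omega(n)$ per active tunnel.

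Finally, I would concatenate the $\Theta(n/\log n)$ per-tunnel alternating subpaths into a single alternating dual path by inserting short alternating bridges through the buffer zones, and apply a union bound over the $O(n/\log n)$ active tunnels and $O(n)$ detour sites per tunnel to obtain the claimed length $\Omega(n^2/\log n)$ with probability $1-o(1)$. Note that a naive approach without rerouting can only guarantee an alternating substring of length $\Theta(\log n)$ per tunnel (the typical longest alternating run in a random binary string of length $n$), which would fall short of the target by a factor of $n/\log n$; the detours are what boost each tunnel's contribution from $\log n$ back up to $n$.

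The principal obstacle is the rerouting step: showing that local reroutings can be performed throughout each active tunnel without reusing faces and while preserving alternation. The buffer-zone spacing introduced by the subsampling step is exactly what allows the reroutings of different active tunnels to remain disjoint, and this accounts for the $\log n$ loss relative to the uncolored bound of Theorem~\ref{thm:mono}.
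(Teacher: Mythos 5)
Your high-level intuition is on the right track: use a $\Theta(\log n)$ ``margin'' per tunnel so that each tunnel can contribute a path of length $\Omega(n)$ with failure probability $n^{-\Theta(1)}$, and union bound. The parameter choice and the target bound $\Omega(n^2/\log n)$ are correct. However, there are genuine gaps, and the paper resolves them by a mechanism quite different from what you sketch.

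First, your independence claim is false. You write that ``consecutive edges along a tunnel lie on distinct pseudolines, so \dots\ the colors seen along each tunnel form an i.i.d.\ uniform binary string.'' Consecutive edges do lie on distinct pseudolines, but a single pseudoline's level is not monotone in $x$: it can cross a given wall (a level curve) several times, so the same pseudoline can reappear as a tunnel edge at far-apart positions in one tunnel. Thus the color sequence along a tunnel is \emph{not} i.i.d., and a per-position Chernoff argument does not apply directly. Any concentration statement has to be phrased in a way that tolerates these dependencies.

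Second, and more seriously, the detour argument -- which you yourself flag as ``the principal obstacle'' -- is left entirely unresolved, and it is exactly where all the difficulty lives. It is not clear that at a failed position there are ``several candidate detour edges,'' nor that a detour into the buffer can always re-enter the active tunnel on the correct side, nor that the $\Theta(n)$ detours along one tunnel can be kept pairwise disjoint. Similarly, the ``short alternating bridges through the buffer zones'' used to concatenate the per-tunnel subpaths are not constructed. In the paper these concatenation bridges require their own argument (paths $L$ and $R$ through an \emph{outer tunnel} of bounded depth, using the fact that every arrangement has a face of depth $n/3$), and they fail with nontrivial probability that must also be controlled.

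The paper avoids all of this by working directly with tunnels of width $w=\Theta(\log n)$ rather than thin tunnels plus buffers, and by replacing the constructive ``walk and detour'' strategy with a non-constructive existence argument. The key tool is Lemma~\ref{lem:monochromaticBorder}: if no directed (alternating) path exists from the entry face of a tunnel to its exit face, then the boundary of the reachable region $\reach(\cdot)$ separates top from bottom wall, and this boundary is monochromatic. Such a blocking boundary forces $w-2$ pseudolines to be monochromatic, an event of probability $2^{-(w-2)+1}$; a union bound over the $O(n^4)$ candidate endpoint pairs then gives failure probability $O(n^4 2^{-w})$, which is $o(1)$ for $w\approx 6\log n$. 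This cut-based argument circumvents both the independence issue and the need to exhibit an explicit rerouting. If you wanted to salvage your constructive approach, you would essentially need to re-derive this reachability/monochromatic-border lemma to justify the detours -- at which point you would have reproduced the paper's mechanism with extra bookkeeping.
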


The idea of the last theorem is to use tunnels with logarithmic
 width. This is enough to find a path from left to right in 
 each tunnel with high probability.
% The proof is somewhat technical,
% because it is much harder to stitch the paths of each tunnel
% to one long path together.

\subsection{Motivation and more related work}
Besides the simplicity of the problem formulation, the main motivating reason to study long paths in uncolored and bicolored line arrangements is its close connection to 
points in the plane.
In computational and discrete geometry  (and other fields) line arrangements are interesting objects of study and their properties have therefore been studied for decades~\cite{felsner}. 

One fact which makes line arrangements interesting is that they are in bijection to fundamental objects: points in the plane.  The bijection exchanges coordinates of a point with the slope and intercept of a line: $(a,b)\longleftrightarrow y=ax-b$. This bijection preserves incidences and order. It is called dual transformation. 
Due to this duality, knowledge from one setting can easily be transformed to the other and vice versa. However, certain properties are more easily captured or expressible by one of the settings. Thus, systematically studying the graph properties of line arrangements may lead to important insight to basic objects of the plane.
Whereas configurations with red and blue points are already well-studied, studying 
bicolored line arrangements is a more recent trend.

Finding long paths or cycles in the primal graph of arrangements has been explicitly studied by Felsner, Hurtado, Noy and Streinu~\cite{HamStefan}.
They have a series of simple results and they already had the idea of routing
paths along the levels of their arrangements.

We call a face in a bicolored linearrangement \emph{bicolored}, if it is bounded
by lines of both colors and \emph{monochromatic} otherwise. It is clear that any alternating path can visit at most two monochromatic faces. 
Thus bicolored line arrangements minimizing the number of bichromatic faces
are potential upper bound examples for the longest path~\cite{DBLP:journals/dmtcs/BoseCCHKLT13}.

A variant, which apparently inspired the study of our problem, was the 
study of a longest non-crossing alternating paths on a given set of red 
and blue points. For an overview of results in this direction see~\cite{ViolaPHD}.

A preliminary version of this work appeared in~\cite{LongPathEuroCG}.

\subsection{Preliminaries}\label{sec:def}

\begin{figure}[bhtp]%
\centering
\includegraphics{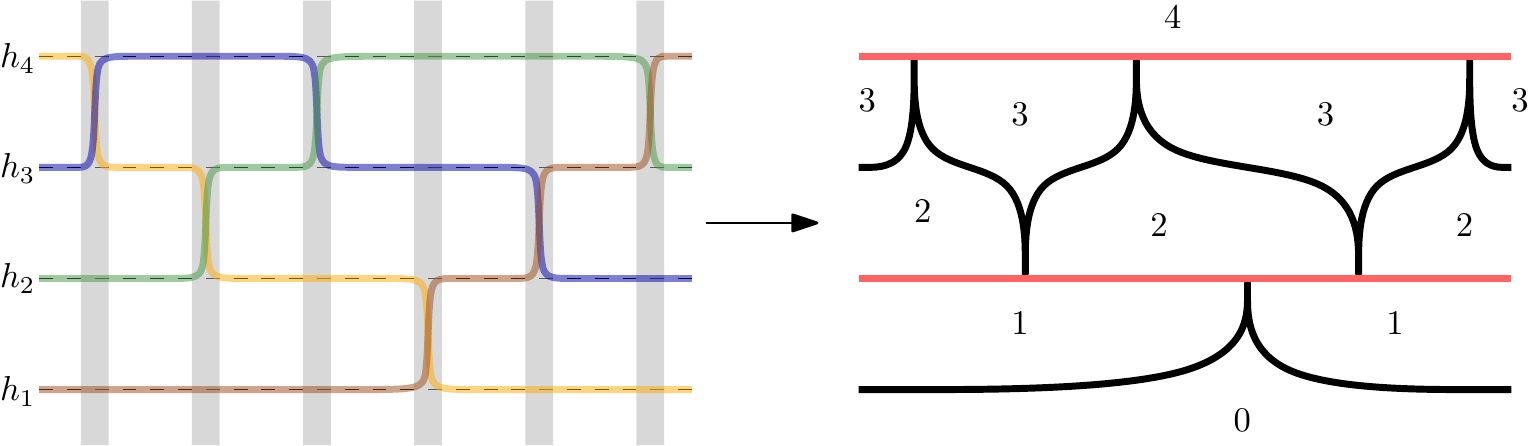}%
\caption{Left: A wiring diagram is displayed. The pseudolines are colored 
orange, blue, green and brown. The horizontal lines 
$h_1,\ldots, h_4$  are dashed and the vertical stripes 
are gray. Right: An equivalent tunnel diagram with width $w = 2$ is displayed. Wall edges are drawn in light red and tunnel 
edges in black. Every face is labeled by its level.}%
\label{fig:WiringDef}%
\end{figure}

An \emph{(Euclidean) pseudoline arrangement} is a set of $x$-monotone curves such that each pair of pseudolines crosses exactly once.
A pseudoline arrangement is \emph{simple} if no three pseudolines have a common point of intersection.
Any simple line arrangement is also a simple pseudoline arrangement.
A pseudoline arrangement partitions the plane into cells of dimension 0, 1, and 2 -- the \emph{vertices}, \emph{edges} and \emph{faces}.
We consider sequences of faces of the arrangement such that
consecutive faces share an edge and no face appears more than once.
We refer to such sequences as \emph{dual paths} and define the
\emph{length} of a dual path to be the number of involved faces. 
We drop the dual, when its clear from the context, but emphasize
primal and dual, when it might cause confusion. Dual paths can 
be seen as simple path in the graph $G^*(A)$.

We now summarize some useful facts about arrangements and 
their combinatorial representations. 
Clearly, the faces of a pseudoline arrangement $\mathcal A$ can be partitioned into \emph{bounded} and \emph{unbounded} faces.
The faces of $\mathcal A$ can be bicolored, such that any 
two faces that are incident to a common edge have different 
colors, i.e., the dual graph is bipartite.
A simple pseudoline arrangement in the Euclidean plane 
has $\binom{n}{2}$ vertices, $n^2$ edges and 
$\binom{n}{2} + n + 1$ faces. The number of unbounded 
faces is $2n$.
For later reference, we summarize two more facts in Observation~\ref{obs:Arr2},
which follow immediately from the fact that two lines cross at most once. 
\begin{observation}\label{obs:Arr2} Properties of Simple Arrangements
 \begin{enumerate}[label=\alph*),itemsep=0pt]
  \item \label{obsItem:deg3} Bounded faces are at least of degree 3.
  \item \label{obsItem:TriagNoAdj} No two bounded faces of degree 3 are adjacent. 
 \end{enumerate}
\end{observation}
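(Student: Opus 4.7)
My plan is to derive both statements from the defining property of pseudoline arrangements, namely that any two pseudolines cross exactly once, together with the local observation that in a simple arrangement exactly two pseudolines meet at each vertex and they divide a small neighborhood into four alternating faces.

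For part~\ref{obsItem:deg3}, I would rule out bounded faces of degree~$1$ and~$2$ separately. A bounded face of degree~$1$ would require a single pseudoline to enclose a region, which is impossible since pseudolines are $x$-monotone curves. A bounded face of degree~$2$ would be bounded by two edges belonging to (at most) two pseudolines; in either case those two pseudolines would have to meet in two distinct vertices, contradicting the fact that they cross exactly once.

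For part~\ref{obsItem:TriagNoAdj}, the plan is a contradiction argument obtained by tracking edge memberships at the two shared vertices. Assume two bounded triangular faces $F_1$ and $F_2$ share an edge $(a,b)$ lying on a pseudoline $\ell_1$, and let $c,d$ be their respective third vertices. At vertex $a$ the four incident edges split between $\ell_1$ and a second pseudoline $\ell_2$; since $F_1$ and $F_2$ lie on opposite sides of $\ell_1$, the edges $(a,c)$ and $(a,d)$ must both sit on~$\ell_2$. Running the same argument at~$b$ produces a pseudoline $\ell_3$ containing both $(b,c)$ and $(b,d)$. Hence $c$ and $d$ are both intersection points of $\ell_2$ with $\ell_3$. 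If $\ell_2 \neq \ell_3$, the axiom that two pseudolines cross at most once forces $c=d$; if $\ell_2 = \ell_3$, then this common pseudoline passes through $a$ and $b$, so it meets $\ell_1$ twice. Either conclusion contradicts the uniqueness of pseudoline crossings, ruling out the adjacency.

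The main obstacle, modest as it is, lies in the bookkeeping at each vertex: one must verify that the two non-shared edges at $a$ (resp.\ $b$) really do end up on the \emph{same} second pseudoline, which uses the fact that the four faces meeting at a simple crossing alternate between the two sides of each of the two pseudolines. Once this local picture is correctly set up, the ``exactly one crossing'' property closes the argument immediately.
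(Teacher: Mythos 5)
Your proposal is correct, and it takes the same approach the paper has in mind: the paper simply states that both parts ``follow immediately from the fact that two lines cross at most once,'' offering no further detail, and your argument is exactly a careful elaboration of that remark. For part~(a) the degree-2 case is the essential one and you handle it correctly; for part~(b) the local alternation of faces at a crossing pins the third edges at $a$ and $b$ onto single pseudolines $\ell_2$ and $\ell_3$, after which ``two pseudolines cross at most once'' forces the contradiction.
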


% For each unbounded face, there is a unique \emph{complementary} unbounded face, such that each path traverses all pseudolines.
% Let $f_\text{top}$ be an unbounded face and  $f_\text{bot}$ its complementary face.
% The pseudolines may be oriented such that $f_\text{top}$ is in the left half space and $f_\text{bot}$ in the right half space. This orientation of the pseudo-lines induces  an \emph{acyclic orientation} of the edges of the arrangement.

We say two arrangements are \emph{equivalent}, if there is 
an isomorphism which preserves the faces, edges and vertices.
It is standard to represent pseudoline arrangements by an 
equivalent \emph{wiring diagram} (introduced by
Goodman~\cite{Goodman}). 
Let $h_1,\ldots, h_n$ be the horizontal lines with $h_i(x)=i$.
An arrangement of $n$ pseudolines is a \emph{wiring diagram} $W$ 
if 
\begin{enumerate}[itemsep=0pt]
	\item the pseudolines are almost always on the horizontal lines 
$h_1,\ldots,h_n$ and 
\item there are $\binom{n}{2}$ thin vertical disjoint stripes such that 
\item in each stripe two pseudolines cross and
\item this is the only time a pseudoline is not on a horizontal line.
\end{enumerate}
See also Figure~\ref{fig:WiringDef} for an illustration.
%Indeed, for each line arrangement $\mathcal A$ and each 
%unbounded face $f$, there is a wiring diagram  $W$ 
%of $\mathcal A$ such that $f$ is the bottom face.
For an introduction to wiring diagrams, we refer 
to \cite{felsner} and \cite{goodmanHandbook}.\\

It is also possible to draw more than one crossing in a 
horizontal stripe if these crossings are independent, see Figure~\ref{fig:UnboundedFaces}.

Orienting the lines from left to right, induces an acyclic orientation of the edges. We call the unbounded face above $h_n$ \emph{top face} and the one below $h_1$ \emph{bottom face} of $W$. All the other unbounded faces are either \emph{left} or \emph{right} unbounded. For every (oriented) pseudolines $\ell$, the top face of $W$ is to the left of $\ell$ and the bottom face to the right of $\ell$.

The \emph{$i$-th level} of a pseudoline arrangement $\A$ is the set
 of edges with exactly $i$ lines strictly below. 
The edges of level $i$ form an $x$-monotone curve in 
every pseudoline arrangement.
%Note that there might be two equivalent pseudoline arrangements with
%different levels (For example, rotate any line 
%arrangement by 180 degree
%and level $i$ becomes level $n-1-i$, for all $i = 0,\ldots,n-1$.).

We define the \emph{(face)-level} of a 
 face $f$ a the number of lines strictly below $f$.
 It is easy to check that this is well defined.
 The bottom face $f_{\text{bottom}}$ of a wiring diagram $W$ is 
 the unique face of level $0$.
Alternatively, one could define the level of a face $f$ as the length
of the shortest dual path from $f$ to
$f_{\text{bottom}}$ (Here and only here, 
the length of a path is the 
number of involved edges.).

 Note that for each unbounded face there exists a 
 shortest path to $f_\text{bottom}$
that consists only of unbounded faces. 
Hence, there are exactly two unbounded faces in each level; 
in $L_i$, we denote the left one by $l_i$, 
and the right one by $r_i$, see Figure~\ref{fig:UnboundedFaces}.
 
 \begin{figure}[hbtp]
 \centering
 \includegraphics{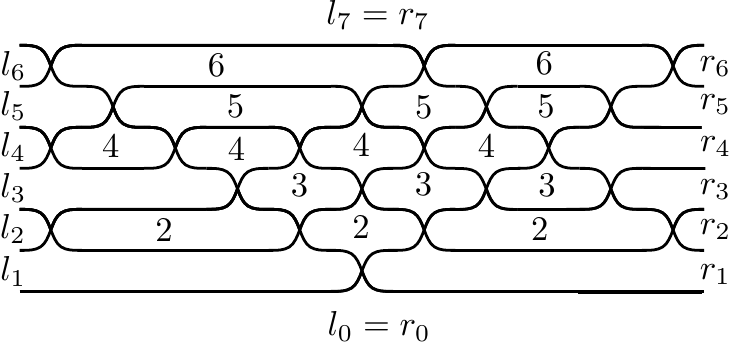}%
\caption{A Wiring diagram is displayed. Each bounded face is labeled with its level and each unbounded face is labeled with $l_i$ or $r_i$, depending on its level and its side. Some crossings are drawn
within the same stripe.}%
\label{fig:UnboundedFaces}%
\end{figure}

We denote by $L_i$ the set of faces with level $i$.
Let $w$ be some integer that divides $n+1$.
We define the $i$-th tunnel of width $w$ as
 \[\mathcal{T}^w_i=\bigcup_{j=iw}^{(i+1)w-1} \, L_j \quad 
 \text{ , for } i = 0,\ldots, \tfrac{n+1}{w}-1.\]
We denote by $l =  \tfrac{n+1}{w} \, -1 $ the index of 
the last tunnel.
To handle the case that $w$ does not divide $n+1$, note that only 
the last tunnel will be smaller and nothing else changes.
Edges bounding faces of different tunnels are called \emph{wall
 edges}. All other edges are called \emph{tunnel edges}.
It will be convenient to represent a pseudoline arrangement by a 
\emph{tunnel diagram}. In a \emph{tunnel diagram} each tunnel
 $\mathcal{T}_i$ is drawn in a horizontal stripe, except for the
 first and last tunnel, which are drawn in an  half plane.
 This implies that all the wall edges separating the same tunnel
 are on one line.
 See on the right of Figure~\ref{fig:WiringDef} for an example.
 Note that the tunnel diagram of a wiring diagram has the same 
 level structure as long as the bottom face is the same.

% ---------------------------------
\section{Long Paths in Pseudoline Arrangements}\label{sec:mono}
In this section we prove Theorem~\ref{thm:mono}:
\mono*

Before we start, we note that there  are line arrangement with $n$ lines, introduced by F\"uredi and Pal\'asti \cite{furedi},
in which the longest path is of length $\frac{1}{3}n^2+O(n)$. 
Hence, up to lower order terms Theorem~\ref{thm:mono} is tight.
For more details, see the remark at the end of this section.\\

We start with a brief overview of the proof.
In the first step of the path construction, we find a path in each tunnel and connect all paths in a consistent manner at their ends. This gives a path~$P$ of length $\frac{1}{4}n^2-O(n)$.
To strengthen this result we prolong the current path~$P$ by incorporating sufficiently many faces, which are not  yet used by~$P$.
We call these unused faces \emph{bad}. (For a precise definition, see bellow.) The set of bad faces has strong structural properties. Most importantly, the set of bad faces induces a set of paths. 
These paths can be incorporated until only isolated bad faces remain.
This has to be done carefully, due to two reasons:
Firstly, some unbounded faces cannot be incorporated. Secondly, after one
rerouting, the structure of our path changes and a second rerouting may not be
possible. After eliminating adjacent bad faces, every remaining unused bad face is
given two units of charge. The charged faces distribute the charge to traversed
faces. It is possible that some traversed faces obtain two units of charge. In
these cases, we reroute again or redistribute the charge. For this second round of
rerouting, we identified the specific configurations that can appear and prove that
no other situation occurs. At last, we will conclude using the charging scheme that
roughly two third of all the faces are traversed and this will conclude the proof.

% For the proof of Theorem~\ref{thm:mono} we work with a \emph{tunnel diagram} of the given pseudoline arrangement. 
% More precisely, we pick an unbounded face $f_\text{top}$, and consider a tunnel diagram of $\mathcal A$ such that $f_\text{top}$ is the top unbounded face.
% Recall that the set of faces with a shortest path  of length $(i+1)$ to $f_\text{top}$  is called \emph{face level $L_i$}.

In this section, we work with tunnels of width 2, i.e. the {\em tunnel} $\mathcal T_i$ is the union of the two levels $L_{2i}$  and $L_{2i+1}$ 
for $0\leq i\leq \lfloor\frac{n-1}{2}\rfloor$.
The set of edges separating tunnels $\mathcal T_{i-1}$ and $\mathcal T_{i}$ is called {\em top wall} of tunnel $\mathcal T_i$; the one separating $\mathcal T_i$ and $\mathcal T_{i+1}$ {\em bottom wall} of tunnel $\mathcal T_i$. 
An edge $e$ of a face $f$ is a {\em wall edge} if $e$ is part of a wall, otherwise it is a \emph{tunnel edge}, see also Figure~\ref{fig:tunnel}. Depending on the type of shared edge, two adjacent faces are \emph{tunnel neighbors} or \emph{wall neighbors}.
%---------------
\begin{figure}[htb]
 \centering
 \includegraphics{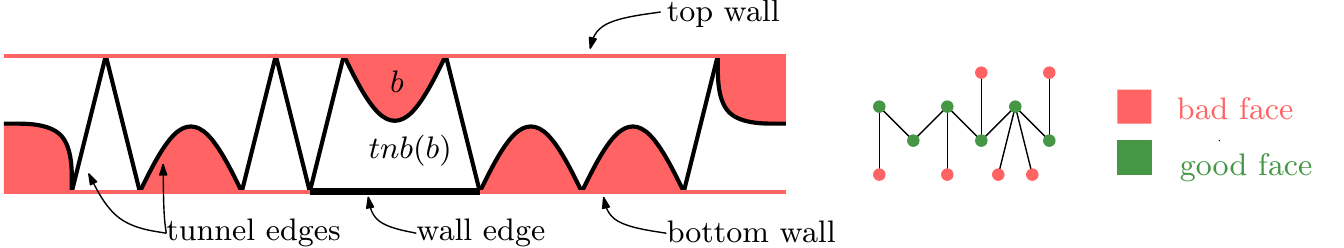}
 \caption{Schematic tunnel structure in primal (left) and dual view (right).}
 \label{fig:tunnel}
\end{figure}
%----------------

Observe that all vertices (which are of degree 4) lie on some wall and are adjacent to exactly two wall edges and two tunnel edges which do not alternate. 
 A face is called \emph{bad} if all its vertices lie on the same wall, otherwise it is called \emph{good}.
 A path is called \emph{bad path} if all involved faces are  bad. 
 Further, every face has either wall edges to the top or to the bottom, but never both. 
For every wall, its edges can be oriented from the left unbounded to the right unbounded edge. This orientation induces a linear order of the edges of a wall (and on the vertices). We call this the \emph{wall order}.
For two faces $f$ and $f'$ adjacent to the same wall, $f$ is left of $f'$ if its leftmost wall edge is left of the one of $f'$. (Since each face has wall edges on exactly one wall, this is well defined.)

The set of tunnel edges belonging to a tunnel can be interpreted as a curve, also \emph{tunnel curve}, that runs from left to right. If the curve touches the same wall twice in a row, the face, which is bounded by the wall and the curve, is bad. Hence, a bad face $b$ has exactly one tunnel edge, yielding  a unique \emph{tunnel neighbor} $tnb(b)$. If the curve connects the top and bottom wall, the adjacent faces of this edge are both good.
The dual graph of a tunnel is a caterpillar where the backbone is formed by good faces which alternatingly have top and bottom wall edges. The leaves are bad faces. 
These tunnel properties are summarized in Observation~\ref{obs:strucProp}.

%-----------------
\begin{observation}\label{obs:strucProp}
 Structural tunnel properties
 \begin{enumerate}[label=\alph*),itemsep=0pt]
  \item Every face $f$ has a wall edge; all wall edges of $f$ are either top or bottom wall edges of $f$'s tunnel.
  \item If a good and bad face share a tunnel edge, then their wall edges belong to different walls.
  \item Every bad face $b$ has exactly one tunnel neighbor $tnb(b)$ and $tnb(b)$ is good.
  \item Bad faces within the same tunnel are not adjacent.
  \item There are two unbounded faces at the beginning and at the end of the tunnel, one of which is good and the other is bad.
  \item There exist a unique path between each two faces of a tunnel.
  \item \label{itm:backbone} The graph $G[\mathcal T_i]$ is a caterpillar. Its backbone is formed by good faces which alternatingly have top and bottom wall edges. The leaves are bad faces. 
 \end{enumerate}
\end{observation}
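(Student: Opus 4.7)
My plan is to base the proof on two ingredients: a careful bookkeeping of edge levels in the tunnel, and the fact that the level-$2i$ edges of $\mathcal T_i$ form an $x$-monotone ``tunnel curve'' whose vertices lie alternately on the two walls. In $\mathcal T_i=L_{2i}\cup L_{2i+1}$ one checks that the two walls consist exactly of the edges of levels $2i-1$ and $2i+1$, while tunnel edges are the edges of level $2i$. At every vertex the two crossing pseudolines produce four incident edges whose levels are $\{k,k,k+1,k+1\}$ for some $k$ and which alternate in cyclic order, so each vertex lies on exactly one wall and its two wall edges and two tunnel edges alternate around it. Tracing the boundary of a face at level $2i$, its edges alternate between level $2i-1$ (wall) and level $2i$ (tunnel), so all wall edges of the face lie on the same wall; symmetrically for level $2i+1$, but on the other wall. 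Since the boundary of a bounded face cannot consist of edges of a single level, every such face has a wall edge, and the same holds for the tunnel-end unbounded faces apart from the global top and bottom faces in the half-plane tunnels, which I would treat as a small case by hand. This establishes~(a); part~(b) then follows because two faces sharing a tunnel edge must lie at the two different levels of the tunnel and therefore have their wall edges on different walls.

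Next I would analyse the tunnel curve directly. It is $x$-monotone and hits the two walls alternately at vertices, so the faces of $\mathcal T_i$ are precisely the components of the tunnel strip cut off by the curve together with the walls. A bad face has an extremely restricted shape: since all its vertices lie on one wall, it must be bounded by a single wall-segment and a single tunnel edge sitting between two consecutive curve-touches of that wall. This immediately gives~(c), together with the fact that its tunnel neighbour, sitting on the opposite side of that edge, has curve-segments reaching the other wall on both sides and is therefore good. Part~(d) is then immediate: adjacencies inside a tunnel come only through tunnel edges (wall edges separate different tunnels), and the unique tunnel-neighbour of any bad face is good. Part~(e) amounts to inspecting the leftmost and rightmost ends of the tunnel curve: depending on which wall the curve first touches, exactly one of $l_{2i},l_{2i+1}$ (respectively $r_{2i},r_{2i+1}$) is bad and the other good.

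For~(f), I would argue that $G[\mathcal T_i]$ is a tree: the tunnel strip is simply connected and the monotone tunnel curve does not self-intersect, so the dual adjacency graph of the components of the strip minus the curve is acyclic. Equivalently, a short Euler-type count on the tunnel curve shows that $|\mathcal T_i|$ equals one plus the number of tunnel edges of $\mathcal T_i$. Part~(g) is then the combination of~(c) and~(f): bad faces are the leaves of this tree, so the remaining good faces form a path (the backbone), and consecutive good faces along this path correspond to consecutive crossings of the curve between the two walls, so their wall edges lie alternately on one wall and the other. The main technical obstacle I foresee is the careful treatment of the global top and bottom faces, and of the first and last tunnels, which are half-planes rather than strips; the generic curve argument needs to be replaced there by direct inspection of the wiring diagram to confirm that properties~(a)--(g) still hold.
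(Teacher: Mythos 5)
Your overall approach --- interpreting the set of tunnel edges as an $x$-monotone ``tunnel curve'' touching the two walls alternately and reading off the structure of $G[\mathcal{T}_i]$ from it --- is exactly the argument the paper gives in the paragraph immediately preceding the observation, so the strategy is right. However, there is a concrete factual error in the first paragraph that the paper itself explicitly contradicts: you claim that the four edges around a degree-$4$ vertex ``alternate in cyclic order'' between levels $k$ and $k+1$, i.e.\ wall and tunnel edges alternate. They do not. Around a crossing of two pseudolines the cyclic order of edge levels is $k,k,k{+}1,k{+}1$: the two edges going to the right are at levels $k$ and $k+1$, the two edges going to the left are at levels $k+1$ and $k$, so the same-level edges are \emph{consecutive}, not alternating. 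The paper states this directly (``adjacent to exactly two wall edges and two tunnel edges \emph{which do not alternate}''). This is not merely cosmetic: you then use the alternation to claim that the boundary of a face alternates between wall edges and tunnel edges, which is false (a bad triangle at level $2i$ has boundary pattern wall--wall--tunnel), and you use it to conclude that every face has a wall edge.

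The conclusions you want are still true, but they need the correct justification. For (a), a face at level $2i$ can only have boundary edges at edge-levels $2i-1$ (all on the bottom wall) and $2i$ (tunnel), so all of its wall edges are automatically on one wall --- no alternation needed; and it must have at least one wall edge because the $x$-monotone tunnel curve alone cannot enclose a bounded region (your own later remark). For (c), the correct observation is not boundary alternation but that a bad face, having all vertices on one wall, is bounded by a maximal wall segment plus a single tunnel-curve arc between two consecutive touches of that wall, hence exactly one tunnel edge. Once you replace the alternation claim by the ``consecutive pairs'' fact and make these local repairs, the rest of your argument --- including the tree/caterpillar structure in (f)--(g) and the endpoint case analysis in (e) --- matches the paper's reasoning.
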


Besides the tunnel properties, we need a property which is based on simplicity of the arrangement. 
Given two faces $f$ and $f'$, we say $f$ is {\em nested} inside $f'$ if all wall edges of $f$ are also wall edges of $f'$. 

% \begin{observation}\label{obs:Arr} Properties of Simple Arrangements
%  \begin{enumerate}[label=\alph*),itemsep=0pt]
%   \item Bounded faces are at least of degree 3.
%   \item \label{obsItem:TriagNoAdj} No two bounded faces of degree 3 are adjacent.
%   \item Let $b$ be a face with $deg(b)=2$. Then $b$ is unbounded, bad and has two unbounded neighbors which are of degree at least 3.
%   \item \label{obsItem:nested} Bounded bad faces are not nested within another face.
%  \end{enumerate}
% \end{observation}
\begin{observation}\label{obs:nested} 
Bounded bad faces are not nested within another face.
\end{observation}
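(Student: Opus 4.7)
The plan is to derive a contradiction from the assumption that a bounded bad face $b$ is nested inside some distinct face $f'$. I will exploit the local structure of bad faces: by Observation~\ref{obs:strucProp}(c), $b$ has exactly one tunnel edge, so all remaining edges of $b$ are wall edges lying on a single wall $W$. Since $b$ is bounded, Observation~\ref{obs:Arr2}(a) yields $\deg(b)\geq 3$, so $b$ has at least two wall edges forming a contiguous path along $W$ through consecutive vertices $v_0,v_1,\ldots,v_k$ with $k\geq 2$. In particular, at least one intermediate vertex $v_i$ with $1\leq i\leq k-1$ exists.

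Next I zoom in on such a $v_i$. Since the boundary of $b$ follows $W$ on both sides of $v_i$, the face $b$ uses the two wall edges $v_{i-1}v_i$ and $v_iv_{i+1}$ at $v_i$. As every face uses a cyclically consecutive pair of edges at each of its boundary vertices, these two wall edges must be cyclically consecutive among the four edges incident to the degree-four vertex $v_i$. The four faces meeting at $v_i$ biject with the four cyclically consecutive edge pairs, so $b$ is the unique face at $v_i$ using both wall edges incident to $v_i$.

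The nesting hypothesis is then directly incompatible with this uniqueness: by definition, both wall edges at $v_i$ are wall edges of $f'$ as well, so $f'$ is also incident to $v_i$ and its two edges at $v_i$ are exactly these two wall edges. Hence $f'$ is a face at $v_i$ using both wall edges, forcing $f'=b$ and contradicting $f'\neq b$.

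The main subtlety I anticipate is the implicit use of the fact that in a simple pseudoline arrangement every face is a topological disk, so it is incident to any given vertex at most once and the bijection between faces at $v_i$ and cyclically consecutive edge pairs is genuine. Once this standard fact is in hand, the remaining argument is just local bookkeeping at $v_i$, and the hypothesis that $b$ is \emph{bounded} enters exactly once: it provides, via the degree bound, the intermediate vertex on which the whole argument hinges.
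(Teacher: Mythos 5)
Your proof is correct, and it reaches the contradiction from the same starting point as the paper: boundedness forces $\deg(b)\ge 3$ (Observation~\ref{obs:Arr2}a), which together with the single tunnel edge yields at least two wall edges that would all have to be shared with the single face $f'$. The paper then finishes in one line by saying a nested bad face would have only two dual neighbors, contradicting degree~$\ge 3$; this tacitly uses the standard fact that in a simple pseudoline arrangement each edge of a face gives a \emph{distinct} neighbor (equivalently, two faces share at most one edge). You instead make this step explicit and local: at the shared interior vertex of two consecutive wall edges of $b$, the degree-4 rotation admits exactly one face corner using that pair of edges, so any $f'$ claiming both edges would coincide with $b$. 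Your version is thus a self-contained justification of precisely the implicit lemma the paper invokes; it buys a bit more rigor at the cost of a slightly longer argument, and it correctly flags the one topological input needed (cells are disks, so a face meets a vertex at most once).
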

This observation follows from the fact that a nested bad face has only two adjacent faces, which is a contradiction to the fact that bounded faces are at least of degree 3, see Observation~\ref{obs:Arr2}\ref{obsItem:deg3}.\\
%--------------------

\bigskip

We now define an ordered set of tunnel paths $\mathcal P:=\{P_i\mid i\in[\lfloor \frac{n}{2} \rfloor]\}$ where
%--------------------
$$P_i:=\text {path in tunnel } \mathcal T_i\ \begin{cases}
       \text{from } l_{2i-1} \text{ to } r_{2i}, & i \text{ odd,}\\
       \text{from } l_{2i} \text{ to } r_{2i-1}, & i \text{ even.}
      \end{cases}$$
      
%------------------
\begin{figure}[ht]
\centering
 \includegraphics[width=.5\textwidth]{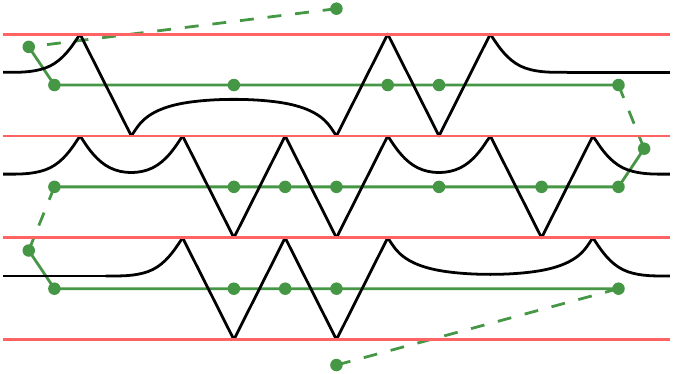}
  \caption{Tunnel diagram together with the gluable paths family $\mathcal P$.}
  \label{fig:wiring}
\end{figure}
%--------------------
By Lemma~\ref{obs:strucProp}~\ref{itm:backbone} the paths $P_i$ are essentially the backbones of $G[{\cal T}_i]$ connecting some left unbounded face with some right unbounded face.      
We think of path $P_i$ as oriented from the left to the right unbounded face.
The path family $\mathcal P$ has the property of being \emph{glueable}, that is the paths are pairwisely disjoint and for odd $i$, the end faces of $P_i$ and $P_{i+1}$ are adjacent, and for even $i$, the start faces of $P_i$ and $P_{i+1}$ are adjacent. Hence,  the paths in $\mathcal P$ can be combined (glued) to one  path by attaching them alternating in a forward and reverse orientation: $P:=P_1,\overleftarrow{P_2}, P_3,\overleftarrow{P_4},\dots$
 (Here $\overleftarrow{P}$ denotes the path that traverses the vertices of $P$ in reverse order.)

\begin{claim}\label{clm:length}
 Let $Q$ be a path from $l_i$ to $r_j$ in $\mathcal A$.\\
If $i,j\leq {n}/{2}$, then $|Q|\geq i+j+1$.
If $i,j\geq {n}/{2}$, then $|Q|\geq2n-i-j+1$.
\end{claim}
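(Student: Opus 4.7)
The plan is to count, for each pseudoline separately, how many times $Q$ must cross it, and then sum. Since the arrangement is simple, every primal edge lies on exactly one pseudoline, so each edge of $Q$ in $G^*(\mathcal{A})$ corresponds to the crossing of a unique pseudoline. Hence the number of edges of $Q$, which is $|Q|-1$, equals the total number of pseudoline crossings performed by $Q$.

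First I would identify which pseudolines separate $l_i$ from $r_j$. Label the pseudolines $\ell_1,\ldots,\ell_n$ by their height on the left end, so that $\ell_k$ enters as the $k$-th pseudoline from the bottom. Because each pair of pseudolines crosses exactly once, the order at $x\to+\infty$ is the reverse of the order at $x\to-\infty$, so $\ell_k$ exits as the $(n+1-k)$-th from the bottom on the right. Consequently
\[
A := \{\ell : \ell \text{ lies below } l_i\} = \{\ell_1,\ldots,\ell_i\}, \qquad B := \{\ell : \ell \text{ lies below } r_j\} = \{\ell_{n-j+1},\ldots,\ell_n\}.
\]
A short calculation, distinguishing whether these intervals are disjoint, yields
\[
|A \triangle B| \;=\; \begin{cases} i+j & \text{if } i+j \leq n,\\ 2n-i-j & \text{if } i+j \geq n.\end{cases}
\]

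The next step is a standard parity argument. For any $\ell \in A\triangle B$ the endpoints $l_i$ and $r_j$ lie on opposite sides of $\ell$, and walking along $Q$ one edge at a time toggles the current side with respect to $\ell$ if and only if that edge lies on $\ell$. Hence $Q$ traverses an odd -- in particular strictly positive -- number of edges on $\ell$. Summing over all $\ell \in A\triangle B$ gives
\[
|Q| - 1 \;\geq\; |A\triangle B|.
\]

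Finally, in the regime $i,j\leq n/2$ we have $i+j\leq n$ and so $|Q|\geq i+j+1$; in the regime $i,j\geq n/2$ we have $i+j\geq n$ and so $|Q|\geq 2n-i-j+1$, as claimed. The only mild subtlety is the explicit identification of $A$ and $B$, which relies on the reversal of pseudoline order between the two ends -- a direct consequence of the assumption that every pair of pseudolines crosses exactly once. Once this reversal is in hand, the parity count is routine.
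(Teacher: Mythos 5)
Your proof is correct and takes essentially the same route as the paper: identify the pseudolines separating $l_i$ from $r_j$ (the paper's ``lines starting below $l_i$ / ending below $r_j$'' are precisely your $A\triangle B$) and observe that each must be crossed, so the path gains at least one face per separating line. Your version is slightly more careful — the symmetric-difference bookkeeping handles the two regimes uniformly and the parity argument makes the ``must cross'' step explicit — but the underlying idea is the same.
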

%--------------------
\begin{proof}
The path $Q$ must cross at least the $i$ lines starting above $l_i$ and the $j$ lines ending above $r_j$.
Crossing any of these lines yields one new face.
Hence, the path is at least of length $i+j+1$. 
For $i,j\geq {n}/{2}$, consider the lines starting and ending below $l_i$ and $r_j$.
\end{proof}

Summing the length of the paths in $\mathcal P$, we immediately obtain that  $P$ is of quadratic length.
\begin{claim*}
 $P$ is at least of length $n^2/4-O(n)$.
\end{claim*}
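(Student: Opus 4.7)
The plan is to lower-bound $|P|$ by summing the individual path lengths $|P_i|$ and applying Claim~\ref{clm:length} to each tunnel path. Since the paths in $\mathcal{P}$ are pairwise disjoint by construction, the glued path satisfies
\[
|P| \;=\; \sum_{i=1}^{\lfloor n/2 \rfloor} |P_i|,
\]
so it suffices to lower-bound each summand and then evaluate the resulting sum.

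For every $i$, the endpoints of $P_i$ are unbounded faces at levels $2i-1$ and $2i$ (the left/right side depending on the parity of $i$). I would split the summation at $i = \lfloor n/4 \rfloor$. For $i \le \lfloor n/4 \rfloor$ both endpoint levels are at most $n/2$, so the first case of Claim~\ref{clm:length} yields $|P_i| \ge (2i-1) + 2i + 1 = 4i$. For $i > \lfloor n/4 \rfloor$ both endpoint levels are at least $n/2$, so the second case gives $|P_i| \ge 2n - (2i-1) - 2i + 1 = 2n - 4i + 2$. At most one index could have its two endpoint levels straddling $n/2$ (this happens only when $n$ is not a multiple of $4$); for that single index I would fall back to the trivial bound $|P_i| \ge 1$, which contributes only to the $O(n)$ error term.

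Summing the two arithmetic progressions,
\[
\sum_{i=1}^{\lfloor n/4 \rfloor} 4i \;+\; \sum_{i=\lfloor n/4 \rfloor + 1}^{\lfloor n/2 \rfloor} (2n - 4i + 2),
\]
each of which contributes roughly $n^2/8$, for a total of $n^2/4 + O(n)$. This is the desired bound. No step is a real obstacle: the claim is essentially a routine arithmetic evaluation on top of Claim~\ref{clm:length}, and it simply establishes the baseline $n^2/4 - O(n)$ that the rerouting argument in the remainder of the section improves to $n^2/3 - O(n)$.
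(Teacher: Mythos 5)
Your proof is correct and takes essentially the same approach as the paper: apply Claim~\ref{clm:length} to each tunnel path $P_i$ and sum. The paper expresses the sum compactly as $2\sum_{i=1}^{\lfloor n/4\rfloor} 4i$ by exploiting the symmetry between the two halves of the index range, whereas you spell out both halves explicitly and also flag the single possible ``straddling'' index; the content is the same.
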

\vspace{-12pt}
$$\sum_{i=1}^{\lfloor\frac{n}{2}\rfloor}|P_i|\geq 2\sum_{i=1}^{\lfloor\frac{n}{4}\rfloor} 4i\geq
%\approx8\sum_{i=1}^{\lfloor\frac{n}{4}\rfloor} i\approx
n^2/4 -n$$
Since the set of unbounded faces induces a cycle, we can combine each subset of paths to one long path. Choosing all subsets of size $\lfloor\frac{n}{4}\rfloor$, we obtain  $ \binom{\lfloor n/2\rfloor}{\lfloor n/4\rfloor}\approx\frac{2^{n/2}}{\sqrt{n}}$ many paths of length $\Omega(n^2)$.

%\newpage
\vspace{1cm}

In the following, we prolong the path $P$ in order to improve the leading coefficient of the length bound from $1/4$ to $1/3$.
A path (or a glueable path family) partitions the set of faces into \emph{traversed} and \emph{not traversed} faces. Since unbounded faces turn out to be special cases, we need to treat them separately. Let $F$ denote the set of all faces of $\mathcal A$ and $U\subset F$ the unbounded faces.
Given a glueable path family $\mathcal{Q}$,
we partition the bounded faces of $F$ into 
the set $T$ of bounded faces traversed by $\mathcal Q$, and
the set $N$ of bounded faces not traversed by $\mathcal Q$. 
To simplify notation: for a glueable path familily $\mathcal Q^{x}$, we denote these sets as  $T^{x}$ and $N^{x}$.
% \footnote{This is a little informal, but we think our clever readers will figure out easily, what we mean and profit from the simple notation.}
A path family $\mathcal Q'$ is a \emph{valid rerouting} of a glueable path family $\mathcal Q$ if 
$\mathcal Q'$ is glueable and $T\subseteq T'$.\\

% In the following, we prolong the path $P$ in order to improve the leading coefficient of the length bound from $1/4$ to $1/3$.
% A path (or a glueable path family) partitions the set of faces into \emph{traversed} and \emph{not traversed} faces. Since unbounded faces turn out to be a special case, we consider them separately. Let $F$ denote the set of all faces of $\mathcal A$. Given a glueable path family $\mathcal Q^x$,
% we partition $F$ into 
% the set $U$ of unbounded faces,
% the set $T^x$ of bounded faces traversed by $\mathcal Q^x$, and
% %  the set $N^x$ of bounded faces not traversed by $\mathcal Q^$.
% A path family $\mathcal Q'$ is a \emph{valid rerouting} of a glueable path family $\mathcal Q$ if 
% $\mathcal Q'$ is glueable and $T\subseteq T'$.\\

%\newpage
The proof concept is as follows: We start with the path family $\mathcal P$ and charge each face in $N$ by 2 units, i.e. $ch(f)=2$ for all $f\in N$.
By a sequence of discharging and valid rerouting steps,  we will obtain a final path family $\mathcal P^*$ and final charge function $ch^*$ such that:
\begin{enumerate}[itemsep=0pt,label=({\arabic*})]
 \item $ch^*(f)=0$ for all $f\in N^*$. 
 \item $ch^*(f)\leq 1$ for all $f\in T^*$.
 \item $ch^*(f)\leq 2$ for all $f\in U$.
 \item $\sum_f ch^*(f)=2|N^*|$.
\end{enumerate}

These conditions imply the wished result as follows:
 By definition,  
 $|F|=|T^*|+|N^*|+|U|$, with  $|F|=\frac{n(n+1)}{2}+1$ and $|U|= 2n$. 
  The conditions give $2|N^*|\leq |T^*|+2|U|$. Clearly, the final path $P^*$ contains all bounded traversed faces:
 $$|P^*|\geq |T^*|\geq  2|N^*|-2|U|= 2(|F|-|T^*|-2|U|) \implies |T^*|\geq \frac{2|F|}{3}-\frac{4|U|}{3}=\frac{n^2}{3}-O(n)$$
 In other words, these conditions imply that $2/3$ of the bounded faces are traversed. Since there are only $O(n)$ unbounded faces, the wished result follows.

\subsubsection*{Initial charge}
We give some initial charge  $ch:F\to \mathbb N$ to the faces in the following way: 
$$ch(f)=\begin{cases}
         2& f\in N,\\
         0&\text{else}.
        \end{cases}
$$
Hence condition $(4)$ is fulfilled by definition. This property will be maintained in the entire process, because we delete charge of faces that become traversed.
Note that charged faces are, by definition, bad and bounded.

\subsubsection*{\headrule}%Fundamental/Leading/Supreme/Head Rule MAJOR RULE?
A charged face (bad and bounded) sends its charge to two different faces, one unit through its leftmost wall edge and the other through the right vertex of its leftmost wall edge, see Figure~\ref{fig:discharging}.
%--------------------
\begin{figure}[ht]
\centering
 \includegraphics{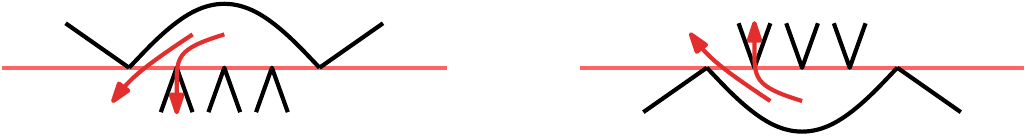}
  \caption{Discharging of bounded not traversed faces through leftmost wall edge and second leftmost vertex.}
  \label{fig:discharging}
\end{figure}
%--------------------
\begin{observation}\label{obs:charge}
 The charge of a face $f$ is sent to faces of an adjacent tunnel. Moreover, charge through vertices is sent to good faces.
\end{observation}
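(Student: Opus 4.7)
My plan is to analyze the local structure at the right endpoint $v$ of the leftmost wall edge $e_L$ of the charged face $b$. For the first claim, note that $e_L$ is a wall edge of $b$'s tunnel $\mathcal T_i$, so the face across $e_L$ lies in the tunnel adjacent to $\mathcal T_i$ along this wall. Since $v$ lies on the same wall, every face incident to $v$ is either in $\mathcal T_i$ or in this adjacent tunnel. I will argue that $b$ is the unique face of $\mathcal T_i$ at $v$: using Observation~\ref{obs:Arr2}\ref{obsItem:deg3} (bounded faces have at least three edges) together with Observation~\ref{obs:strucProp} (a bad face has exactly one tunnel edge), $b$ has at least two wall edges, so $v$ is an interior wall vertex of $b$ and the local boundary of $b$ at $v$ consists of $e_L$ together with the next wall edge $e_R$. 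Consequently any other face at $v$, and in particular the vertex-receiver, lies in the adjacent tunnel.

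For the second claim, I will identify the vertex-receiver $f'$ as the face at $v$ opposite to $b$, namely the face whose local boundary at $v$ consists of the two tunnel edges at $v$. This identification relies on the non-alternating cyclic order of wall and tunnel edges around $v$ recorded in the preliminaries: the two wall edges are consecutive in cyclic order around $v$ and so are the two tunnel edges, with $b$ occupying the angle between the two wall edges and $f'$ occupying the opposite angle. I then derive that $f'$ is good by contradiction. Suppose $f'$ were bad. Then its whole boundary would contain exactly one tunnel edge, and at every wall vertex of $f'$ the two edges in its local boundary would be either two wall edges (at an interior wall vertex of $f'$) or one wall edge and one tunnel edge (at the two extreme wall vertices, where the unique tunnel edge of $f'$ attaches to the wall). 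Neither alternative permits the two boundary edges at $v$ both to be tunnel edges, contradicting the description of $f'$ at $v$.

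The hard part will be the geometric identification of $f'$, which uses the non-alternation of wall and tunnel edges around $v$ together with the fact that $b$ is the unique face of $\mathcal T_i$ at $v$. Once this local picture is in place, the second claim follows from the short counting argument about wall versus tunnel edges at a wall vertex of a bad face.
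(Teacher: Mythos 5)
Your proof is correct and takes essentially the same approach as the paper's: both rest on the local picture at the vertex $v$ -- the non-alternating cyclic order of wall and tunnel edges and the structure of bad faces. You spell out the details (that $b$ is the unique face of $\mathcal T_i$ at $v$, and that the vertex-receiver is the opposite face between the two tunnel edges) that the paper compresses into the one-line assertion that ``by the tunnel structure, $f'$ has wall edges on the bottom wall of its tunnel.''
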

\begin{proof}
  By definition, the charge of $f$ is sent through the wall and hence, ends up in an adjacent tunnel.
  
  Let $f$ be a face sending charge to $f'$ through a vertex $v$ which is without loss of generality on the top wall of the tunnel of $f'$. By the tunnel structure, $f'$ has wall edges on the bottom wall of its tunnel. Since $v$ is on the top wall, $f'$ is a good face.
\end{proof}

Its important to note that we may divert some of this charge. This is why we think of it as ''being on its way`` that is, we distinguish between charge of a face which is \emph{sent} and \emph{obtained}. In particular, we  partially \emph{overwrite} this \headrule\ in the following two phases!

\subsubsection*{Rerouting and Discharging Step 1}
 First, we guarantee that no face in $N$ obtains charge.
 In particular, we eliminate adjacent charged faces by incorporating them in some path. If this is not possible, we discharge. 
 Consider the  set of (all not only bounded) not traversed faces $N_\text{All}$ in $\mathcal P$.

We exploit some structural properties of the graph $G[N_\text{All}]$ induced by the set $N_\text{All}$.
 \begin{observation}\label{obs:strucProp2}
  Structural properties of $G[N_\text{All}]$
  \begin{enumerate}[label=\alph*),itemsep=0pt]
   \item If face $f$ is good, then $f\notin N_\text{All}$.
   \item If face $f$ is bad and $f\notin N_\text{All}$, then there exists $i$ such that $P_i$ starts or ends in $f$.
   \item The graph $G[N_\text{All}]$ is a collection of paths. The wall orders induce a left-right orientation on the paths. 
   %\item A path in $G[N_\text{All}]$ contains either left or right unbounded faces, never both.
   %\item Let $p=b_1,b_2,\dots$ be a path in $G[N_\text{All}]$ with $|p|\geq 5$. Then there exist $i\in[5]$ such that $pred(tnb(b_i))$ exists.
   \item Let $Q=b_1,b_2,\dots$ be a maximal path in $G[N_\text{All}]$ oriented from left to right. If deg$_\mathcal 
   A (b_1)\geq 3$, then the penultimate wall neighbor of $b_1$ is traversed.
   \item Let $Q=b_1,b_2,\dots$ be a maximal path in $G[N_\text{All}]$ oriented from left to right. If $|Q|\geq 5$, then there exist $j\leq 4$ such that $b_j$ has a traversed wall neighbor.
  \end{enumerate}
 \end{observation}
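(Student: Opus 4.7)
I plan to prove the five parts in sequence, using the caterpillar structure from Observation~\ref{obs:strucProp}~\ref{itm:backbone} and the non-nesting statement of Observation~\ref{obs:nested} as the main tools. Parts (a) and (b) are direct: each $P_i$ is a path in $G[\mathcal T_i]$ from a left to a right unbounded face of the tunnel, so by the caterpillar structure it must traverse the entire backbone, i.e., every good face of the tunnel; hence no good face can lie in $N_\text{All}$, giving (a). Since $P_i$ can visit leaves of the caterpillar only at its two (unbounded) endpoints, (b) follows immediately.

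The heart of the argument is (c). By (a) every vertex of $G[N_\text{All}]$ is bad, and Observation~\ref{obs:strucProp}(d) forbids adjacencies within a tunnel, so every edge of $G[N_\text{All}]$ crosses a shared wall $W$ between two bad faces in opposite tunnels. To bound the degree by two I fix a bad face $b$ with wall-edge range $[a,c]$ on $W$ and consider a bad wall neighbor $b'$ in the opposite tunnel with range $[a',c']$ overlapping $[a,c]$. Observation~\ref{obs:nested} forbids $[a',c']\subseteq[a,c]$ whenever $b'$ is bounded, so every bounded bad wall neighbor strictly extends $b$'s range either to the left ($a'<a$) or to the right ($c'>c$). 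Two bad faces in the same opposite tunnel cannot share a wall edge nor a wall vertex, as either would force an adjacency forbidden by Observation~\ref{obs:strucProp}(d). Hence at most one bounded neighbor is a left-extender and at most one is a right-extender; the at most two unbounded bad faces that live at the two tips of $W$ fit into the same count. This gives $\deg_{G[N_\text{All}]}(b)\le 2$, so $G[N_\text{All}]$ is a disjoint union of paths, and the wall order on $W$ induces the claimed left-to-right orientation on each component.

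Part (d) then follows quickly. Since $b_1$ is the left endpoint of a maximal component, it has degree exactly one in $G[N_\text{All}]$ with unique neighbor $b_2$; by the classification in (c) this $b_2$ is the right-extender of $b_1$, hence the rightmost wall neighbor of $b_1$ on $W$. Under $\deg_\mathcal A(b_1)\ge 3$ the face $b_1$ has at least two wall neighbors, and the penultimate one $w$ is strictly left of $b_2$ in the wall order. If $w$ were in $N_\text{All}$ it would be a second $N_\text{All}$-neighbor of $b_1$, contradicting the endpoint property, so $w$ is either good (and traversed by (a)) or bad and traversed (an unbounded endpoint used by some $P_i$, by (b)); in both cases $w$ is traversed.

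For (e) I first note that $Q$ alternates between two tunnels sharing a common wall $W$. For an interior $b_j$ whose path-neighbors $b_{j-1},b_{j+1}$ lie to its left and right on $W$, the non-nesting argument from (c), together with the fact that $b_{j-1}$ and $b_{j+1}$ cannot share a wall vertex, forces a nonempty gap between their $W$-ranges, and $b_j$'s own range spans this gap. Any face of the opposite tunnel sitting in the gap is a wall neighbor of $b_j$; such a face cannot be bad and in $N_\text{All}$ (it would give $b_j$ a third neighbor in $G[N_\text{All}]$), nor bad and traversed (it would be unbounded by (b) yet lies in the interior of $W$). Hence the gap contains a good backbone face of the opposite tunnel, which is traversed by (a). The bound $j\le 4$ absorbs the at most two unbounded bad faces that can sit at the left tip of $W$: when $b_1$ or $b_2$ is unbounded the first interior position whose two path-neighbors are both bounded may be pushed to $b_3$ or $b_4$, but never further. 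The main obstacle I anticipate is (c), where the degree bound requires careful bookkeeping of nesting together with the finite number of unbounded faces at the tips of each wall; once (c) is established, (d) is direct and (e) reduces to the gap argument plus a short boundary count.
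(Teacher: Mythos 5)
Parts (a), (b), (d) follow the paper's argument closely. Your treatment of (c) reaches the right conclusion (maximum degree $2$ in $G[N_\text{All}]$) and, like the paper, rests on Observation~\ref{obs:nested}; but the sub-claim you state as a ``fact'' --- that two bad faces of the opposite tunnel cannot share a wall vertex, ``as either would force an adjacency forbidden by Observation~\ref{obs:strucProp}(d)'' --- is not justified by that observation. Two faces of the same tunnel meeting at a wall vertex $v$ are the two faces on the three-face side of $v$ that flank the middle one; they share only $v$ and are \emph{not} adjacent, so Observation~\ref{obs:strucProp}(d) says nothing about them. For (c) this is harmless, because the degree bound already follows from the fact that the wall ranges of faces on one side of a wall have pairwise disjoint interiors, so two left-extenders (or two right-extenders) of $b$ would have to overlap on an edge, which is impossible.

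The genuine problem is in (e), where you lean on exactly this unproved ``no shared wall vertex'' claim to manufacture a nonempty gap between $b_{j-1}$ and $b_{j+1}$. That claim can fail: if $b_j$ is a bad bounded face of degree $3$ (a triangle with two wall edges meeting at a vertex $v$ and a single tunnel edge), then its only two wall neighbors are $b_{j-1}$ and $b_{j+1}$, and they necessarily meet at $v$ --- there is no gap and no good wall neighbor of $b_j$ at all. Your argument, run face by face, cannot rule this out for any single $b_j$, so it does not establish (e). The paper sidesteps this by arguing about \emph{two consecutive} positions: after forcing $b_1$ (hence, via the construction of $\mathcal P$, the right end of $Q$) to be the only possible unbounded end, it concludes $b_3$ and $b_4$ are bounded, and if neither had a traversed wall neighbor then each would have exactly two wall neighbors plus one tunnel neighbor, i.e.\ both would be bounded faces of degree $3$ that are adjacent --- contradicting Observation~\ref{obs:Arr2}\ref{obsItem:TriagNoAdj}. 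That pairing step (two adjacent triangles are impossible) is the missing ingredient in your proof; the per-face gap argument, as written, is incorrect.
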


 \begin{proof}
 a) Let $f$ be a good face, then $f$ is part of the backbone of some caterpillar and hence, included in some $P_i$.\\
 b) Let $f$ be a bad face. Then $f$ is a leaf of some caterpillar. A leaf is included if and only if some $P_i$ starts or ends in $f$.\\
 c)  By Observation~\ref{obs:strucProp}d), all edges in $G[N_\text{All}]$ are dual to wall edges. By Observation~\ref{obs:nested}, the maximal degree in $G[N_\text{All}]$ is 2.  
 By Observation~\ref{obs:strucProp}a), all wall edges of a face are on the same wall.
 Hence, all faces of a path in $G[N_\text{All}]$  have their wall edges on the same wall, which is also cycle-free. 
 Consequently, $G[N_\text{All}]$ is a collection of paths.  
 Moreover the wallorder induces an orientation on the path: Every two faces of the path have wall edges on the same wall, hence, one is left of the other.\\
 d) Since $b_1$ is a bad face, it has only one tunnel neighbor. Since deg$_\mathcal A (b_1)\geq 3$, $b_1$ has at least two wall neighbors. By the maximality of $Q$, all but $b_2$ are traversed. Otherwise $Q$ could be extended.\\
 e) Suppose by contradiction that each of $b_1,\dots,b_4$ has no traversed wall neighbor. If deg$_\mathcal A (b_1)\geq 3$, then $b_1$ has a traversed  wall neighbor by d). Hence, deg$_\mathcal A (b_1)=2$, in particular $b_1$ is unbounded. By construction of the path family $\mathcal P$, a path in $G[N_\text{All}]$ cannot start and end with unbounded faces. Therefore $b_3$ and $b_4$ are bounded, wall neighbors and have no good wall neighbor (by assumption). This implies that they are adjacent bounded faces of degree 3; a contradiction to Observation~\ref{obs:Arr2}\ref{obsItem:TriagNoAdj}.
 \end{proof}

\begin{step}
 We construct a valid rerouting $\mathcal P^{(1)}$ of $\mathcal P$ and discharge such that 
 \begin{itemize}[itemsep=0pt]
  \item[(P1)] $ch(f)=0$ for all $f\in N^{(1)}$,
  \item[(P2)] $ch(f)\leq 2$ for all $f\in F$, and
  \item[(P3)] $\sum_f ch(f)=2|N^{(1)}|$.
 \end{itemize}
 \end{step}

Let $Q$ be a maximal path in $G[N_\text{All}]$ with $|Q|\geq 2$, where $b_1,b_2$ denote the first two (bad) faces of $Q$. The aim is to replace single edges of a current path in $\mathcal P$ by a path through not yet traversed faces.
For this replacement, we  enter the bad path by a traversed wall neighbor of a bad face. We use the sufficient conditions for the existence of such neighbor by Observation~\ref{obs:strucProp2}d) and e) and distinguish three cases, see Figure~\ref{fig:Step1}.
We initialize $\mathcal P_\text{curr}:=\mathcal P$ and redefine some of the paths during the phases of step~1.

\begin{step*}[\bfseries 1a]
  If deg$_\mathcal A (b_1)\geq 3$ and $|Q|\geq 2$:\\
  We set $f_\text{enter}$ to the penultimate wall neighbor of $b_1$ and  $f_\text{exit}$ to the tunnel neighbor of $b_2$. Let $P_i^\text{curr}$ be the path of $\mathcal P_\text{curr}$ containing $f_\text{enter}$. (We will show that $f_\text{enter}$ and $f_\text{exit}$ are consecutive in $P_i^\text{curr}$.) 
  The idea is to insert $b_1$ and $b_2$ in between $f_\text{enter}$ and $f_\text{exit}$ in $P_i^\text{curr}$. For the formal definition,  
  let $P'$ denote the prefix of $P_i^\text{curr}$ ending right before $f_\text{enter}$ and $P''$ denote the suffix of $P_i^\text{curr}$ starting after $f_\text{exit}$. (If $f_\text{enter}$ and $f_\text{exit}$ are adjacent, it holds that $P_i^\text{curr} = P', f_\text{enter},f_\text{exit},P''$.) We alter $P_i^\text{curr}$ by inserting $b_1,b_2$:
  $$ P_i^\text{curr}:=P',f_\text{enter},b_1,b_2, f_\text{exit},P''$$
  %$$ P:=P',f_\text{enter},b_1,b_2, f_\text{exit},P''$$
  Since $b_1,b_2$ are now traversed, their status switches from $N$ to $T$ and (if they are charged,) their charge is deleted.\\
  If $(|Q|-2)\geq 2$, apply Step\,1a) to $Q':= Q-\{b_1,b_2\}$.
\end{step*}
%----------------------------
\begin{step*}[\bfseries 1b]
  If deg$(b_1)=2$ and $|Q|\geq 5$: \\
  Determine $j_{\min}$, the smallest $j$ such that $b_j$ has a traversed left wall neighbor.
  $f_\text{enter}$ is set to the rightmost traversed wall neighbor of $b_{j_{\min}}$ and $f_\text{exit}$ as tunnel neighbor of $b_{(j_{\min}+1)}$. Let $P_i^\text{curr}$ denote the path of $\mathcal P_\text{curr}$ containing $f_\text{enter}$.
  Let $P'$ denote the prefix of $P_i^\text{curr}$ ending right before $f_\text{enter}$ and $P''$ denote the suffix of $P_i^\text{curr}$ starting after $f_\text{exit}$.
  We reroute as in a):
  $$ P_i^\text{curr}:=P',f_\text{enter},b_{j_{\min}},b_{(j_{\min}+1)},f_\text{exit},P''$$
  Since $b_{(j_{\min})},b_{(j_{\min}+1)}$ are now traversed,  their status switches from $N$ to $T$ and (if they are charged) their charge is deleted.\\
  If $j_{\min}=4$, the total charge of $b_3$ is terminatory sent to the unbounded face $b_1$.\\
  If $(|Q|-j_\text{min} -1)\geq 2$, apply Step\,1a) to $Q':= Q-\{b_1,\dots,b_{(j_{\min}+1)}\}$.
\end{step*}
%----------------------------
\begin{step*}[\bfseries  1c]
  If deg$(b_1)=2$ and $|Q|\leq 4$:\\
  The total charge of $b_3$ and $b_4$ (if they exist and are charged) is terminatory sent to the unbounded faces $b_1$ and $b_2$, respectively.
\end{step*}

We denote the path family obtained after handling all path in $G[N_\text{All}]$ by $\mathcal P^{(1)}$, and the corresponding set of traversed and not traversed bounded faces by $T^{(1)}$ and $N^{(1)}$, respectively.

%--------------------
  \begin{figure}[htb]
  \centering
    \includegraphics{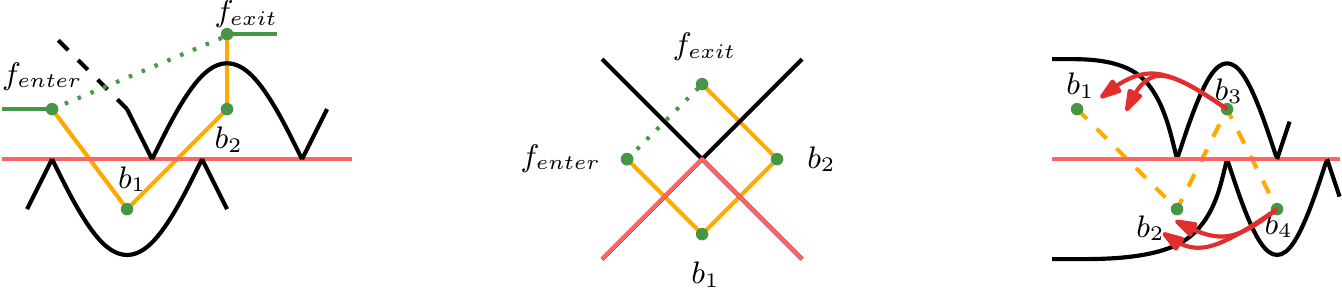}
    \caption{Left: Rerouting in Step 1a) and 1b) -- adjacent bad faces are included by replacing the dotted green edge by the orange marked path. Middle: Schematic rerouting of Step 1\,[a),\,b)]: $f_\text{enter}$, $f_\text{exit}$ $b_1, b_2$ are adjacent to a vertex. In the rerouting step an edge is replaced by the `detour' around the vertex.
    Right: Discharging in Step 1c) -- charge from $b_3$ and $b_4$ is sent to unbounded faces.}
  \label{fig:Step1}
\end{figure}
%--------------------
\begin{claim*}
 $\mathcal P^{(1)}$ is a valid rerouting of $\mathcal P$.
\end{claim*}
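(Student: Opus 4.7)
The plan is to verify the two defining properties of a valid rerouting: that $\mathcal P^{(1)}$ is glueable and that $T\subseteq T^{(1)}$. The second inclusion is essentially free, because neither Step 1a nor Step 1b ever removes a face from a current path; they only \emph{insert} previously untraversed bad faces. So the real work is to check that every insertion produces a legitimate path (consecutive faces adjacent, no repetitions) and that the collection remains glueable.

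The core of the argument is a local analysis around the two bad faces $b_1,b_2$ to be inserted. Since edges of $G[N_\text{All}]$ are dual to wall edges (Observation~\ref{obs:strucProp2}c), consecutive bad faces along $Q$ live in \emph{adjacent} tunnels separated by their common wall. In particular, $b_1$ and $b_2$ are in different tunnels and share exactly one wall edge. The face $f_\text{exit}=tnb(b_2)$ is a good face of $b_2$'s tunnel (Observation~\ref{obs:strucProp}c), hence belongs to the backbone path of that tunnel. The face $f_\text{enter}$ lies on the opposite side of the shared wall from $b_1$, i.e.\ in the same tunnel as $f_\text{exit}$; Observation~\ref{obs:strucProp2}d (for Step 1a) and the choice of $j_{\min}$ (for Step 1b) guarantee it is traversed, hence also on the same backbone path $P_i^\text{curr}$. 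A brief look at the degree-$4$ vertex shared by the four faces $b_1,b_2,f_\text{enter},f_\text{exit}$ (middle of Figure~\ref{fig:Step1}) — using the fact that at each crossing two wall edges and two tunnel edges meet without alternating — shows that $f_\text{enter}$ and $f_\text{exit}$ are joined by a tunnel edge, so they are \emph{consecutive} on $P_i^\text{curr}$. Hence the substitution of $f_\text{enter},b_1,b_2,f_\text{exit}$ for the single edge $f_\text{enter}f_\text{exit}$ is a legitimate detour, and since $b_1,b_2\in N_\text{All}$, no face is repeated.

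Next I would check that the modifications preserve glueability globally. Every insertion is into an internal edge of some $P_i^\text{curr}$, so its endpoints — which are the unbounded faces relevant for gluing — are unaffected; thus the odd/even endpoint-matching property of $\mathcal P$ persists. Pairwise disjointness is maintained because each inserted face is immediately moved from $N$ to $T$, and later steps only touch faces still in $N$. Finally, iterated applications within a single maximal $Q$ (applied to $Q-\{b_1,b_2\}$ or $Q-\{b_1,\dots,b_{j_{\min}+1}\}$) act on disjoint pairs of bad faces, and distinct maximal bad paths in $G[N_\text{All}]$ are vertex-disjoint by definition, so separate reroutings never collide.

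The main obstacle is precisely the local adjacency claim: that $f_\text{enter}$ and $f_\text{exit}$ appear consecutively on a single backbone path. Everything else — the simplicity of the new path, the invariance of the endpoints, the non-interference of successive rewirings — follows routinely once this local picture is established, together with the caterpillar description of each tunnel in Observation~\ref{obs:strucProp}\ref{itm:backbone}. In particular, because the backbone edges are exactly the tunnel edges between consecutive good faces, verifying the detour reduces to checking that the two wall edges $b_1b_2$ and $f_\text{enter}f_\text{exit}$ share the same endpoint, which is immediate from the wall-order of $b_1$'s wall neighbors.
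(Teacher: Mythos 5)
Your high-level skeleton (show glueability, show $T\subseteq T^{(1)}$ by arguing each rewiring replaces a single edge of $P_i^\text{curr}$ by a three-edge detour through $b_1,b_2$) matches the paper. Your local analysis around the degree-$4$ vertex is also sound: the four faces $b_1,b_2,f_\text{enter},f_\text{exit}$ do meet there, the wall/tunnel edges are non-alternating, and $f_\text{enter}f_\text{exit}$ is indeed a tunnel edge. That establishes adjacency of $f_\text{enter}$ and $f_\text{exit}$ \emph{in the arrangement} $\A$.

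The gap is your very next inference: ``$f_\text{enter}$ and $f_\text{exit}$ are joined by a tunnel edge, so they are consecutive on $P_i^\text{curr}$.'' Adjacency in $\A$ (or even in the original caterpillar backbone) is not the same as consecutiveness in $P_i^\text{curr}$, because $P_i^\text{curr}$ may already have been modified by earlier applications of Step~1. Two things go wrong with the shortcut. First, when Step~1 is iterated along the same bad path $Q$, the new $f_\text{enter}$ can itself be a \emph{bad} face (the $b_2$ of the previous iteration), so it is not a backbone face of the original caterpillar at all; consecutiveness has to be traced back to the moment when that bad face was inserted. Second, even when $f_\text{enter}$ is good, the backbone edge $f_\text{enter}f_\text{exit}$ could in principle have been replaced earlier by a different rerouting that used the same edge with $f_\text{enter}$ as the penultimate wall neighbor of some other bad face $b$; one has to rule that out. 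Your paragraph about ``separate reroutings never collide'' only guarantees that the \emph{inserted bad faces} are pairwise distinct — it says nothing about the $f_\text{enter}$/$f_\text{exit}$ pairs, which live in $T$ and can certainly be shared or affected across iterations.

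The paper closes exactly this gap with a three-stage argument: (i) $f_\text{enter},f_\text{exit}$ are adjacent in $\A$ (which you have); (ii) they were consecutive in some \emph{earlier version} of $P_i^\text{curr}$ — either originally (if $f_\text{enter}\in T$ from the start), or right after the iteration that inserted $f_\text{enter}$ as a $b_2$, using that the unique tunnel neighbor of a bad $f_\text{enter}$ is $f_\text{exit}$; and (iii) that consecutive pair was never broken, because by Observation~\ref{obs:strucProp}a) and Observation~\ref{obs:nested} the only rerouting that could delete that edge would have had to use $b_1$ as its bad face, contradicting $b_1\in N_\text{All}$. Without (ii) and (iii) your argument does not actually establish $T\subseteq T^{(1)}$, so the proof is incomplete as written.
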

\begin{proof}
Note that by construction and Observations~\ref{obs:strucProp2}d), \ref{obs:strucProp2}e), and~\ref{obs:strucProp}c), the considered faces exist and are adjacent. Hence, the new path is well defined.

First, $\mathcal P^{(1)}$ is glueable: By construction, the first and last face of $P_i$ remained for all $i$. Moreover, the paths remain disjoint since only not yet traversed faces are incorporated.

Second, we show that  $T\subseteq T^{(1)}$. We therefore consider a single rerouting step (of Step 1a or 1b) applied to some path $P_i^\text{curr}$. For simplification, lets denote this path by $P$ before and by $P_\text{mod}$ after the modification. 
We show that  $f_\text{enter}$ and $f_\text{exit}$ are consecutive in $P$. This implies an edge was replaced by a path and consequently, all traversed faces remain traversed. 

We show this is three steps:
Firstly, $f_\text{enter}$ and $f_\text{exit}$ are adjacent faces in $\A$. Secondly, $f_\text{enter}$ and $f_\text{exit}$ are adjacent in an earlier version of $P\ ^\dagger$. Thirdly, this adjacency is removed only when $b_1$ is added to $P$.

The faces $f_\text{enter}$ and $f_\text{exit}$ are adjacent since $f_\text{enter}$, $f_\text{exit}$, $b_1$, and $b_2$ are incident to the left vertex of $b_1$ and $b_2$. Recall that every vertex is of degree 4 in the primal.

 %, the so called \emph{pivot vertex}.
 We claim that there was a point in time when $f_\text{enter}$ and $f_\text{exit}$ were consecutive in some earlier version of $P$
 \footnote[2]{Let $\mathcal S_i$ be the sequence of paths in tunnel $\mathcal T_i$ during Step~1. $P$ and $P_\text{mod}$ occur as consecutive elements in $\mathcal S_i$. An \emph{earlier version of $P$} refers to some element of $\mathcal S_i$ before $P$.}. 
 If  $f_\text{enter}$ was initially traversed then we are done; if not, then $f_\text{enter}$ is a bad face which  was incorporated in some iteration in the role of the second bad face (it was incorporated since it is now traversed). Since the tunnel neighbor of $f_\text{enter}$ is the same as the one of $b_2$, namely $f_\text{exit}$, it follows that $f_\text{enter}$ and $f_\text{exit}$ were consecutive in some earlier version of $P$. 
 
 Now, suppose by contradiction that $f_\text{enter}$ and $f_\text{exit}$ are not consecutive in $P$ any more.
 Then, this edge was replaced by an application of Step 1 where $f_\text{enter}$ was the penultimate wall neighbor of some bad face $b$. But then, by Observations~\ref{obs:strucProp}a) and \ref{obs:nested}, $b$ must be $b_1$. This is a contradiction to the fact that $b_1$ is not traversed by $P$.
\end{proof}

Next, we state a crucial property which we apply afterwards in order to prove that the claimed conditions are fulfilled.
\begin{observation}\label{obs:rightSend}
 Bad faces still sending charge were rightmost of a bad path in $G[N_\text{All}]$.
\end{observation}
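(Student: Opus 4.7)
The plan is a case analysis on each maximal bad path $Q=b_1,\ldots,b_k$ of $G[N_\text{All}]$, oriented left to right by the wall order granted by Observation~\ref{obs:strucProp2}c). For each sub-case of Step~1, I track which bad faces remain in $N^{(1)}$ and still need to dispatch their initial charge of $2$ via the \headrule, i.e.\ faces whose charge has neither been deleted by incorporation into a rerouted $P_i$ nor terminatorily redirected to an unbounded face.

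First, if $|Q|=1$ then the single face is trivially its own rightmost element and Step~1 does not touch it, so the claim is immediate. For $|Q|\geq 2$, Step~1a applies when $\deg(b_1)\geq 3$ and recursively incorporates the consecutive pairs $(b_1,b_2),(b_3,b_4),\ldots$ via the penultimate-wall-neighbor rerouting; since the restart face $b_3$ is interior to $Q$ and therefore bounded, the recursion keeps invoking Step~1a all the way through. If $k$ is even every face is incorporated and loses its charge; if $k$ is odd the only residual charged face is $b_k$, which is the rightmost element of $Q$.

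When Step~1b is triggered ($\deg(b_1)=2$, $|Q|\geq 5$), the pair $(b_{j_{\min}},b_{j_{\min}+1})$ is incorporated, and the recursion on the suffix $(b_{j_{\min}+2},\ldots,b_k)$ reduces to Step~1a and again leaves at most $b_k$ with charge. The ``left stub'' $(b_1,\ldots,b_{j_{\min}-1})$ is emptied by the explicit terminatory rule of Step~1b: $b_1$ is unbounded and carries no initial charge, and the charges of its bounded interior faces are diverted to $b_1$. Step~1c mirrors this for short paths, sending the charges of $b_3,b_4$ terminatorily to the unbounded endpoints. Combining the three sub-cases, in every situation the bad faces still sending charge are precisely the rightmost faces of bad paths in $G[N_\text{All}]$.

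The main obstacle is the careful book-keeping for the Step~1b left stub: one must verify that every charged face among $(b_1,\ldots,b_{j_{\min}-1})$ is really terminatorily redirected, rather than left to send via the \headrule\ to an adjacent bad face. This uses the minimality of $j_{\min}$ together with Observations~\ref{obs:nested} and~\ref{obs:Arr2}\ref{obsItem:TriagNoAdj} to rule out degree configurations that would produce an earlier value of $j_{\min}$ or an unplanned accumulation of charge on a bounded interior face of the stub.
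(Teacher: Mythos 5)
Your case analysis mirrors the paper's proof of this observation: you walk through Steps~1a, 1b, 1c, show that the incorporation in pairs combined with the recursive fall-back to Step~1a leaves at most the rightmost face uncharged-and-sending, and you isolate the Step~1b ``left stub'' as the delicate point. This is essentially the same argument the paper gives (the paper's own statement of it is even terser and slightly imprecise about Step~1b, where it says ``$b_1,b_2$ are incorporated'' rather than $b_{j_{\min}},b_{j_{\min}+1}$). One remark to tighten the stub bookkeeping: you appeal to Observations~\ref{obs:nested} and~\ref{obs:Arr2}\ref{obsItem:TriagNoAdj} to ``rule out an unplanned accumulation of charge on a bounded interior face of the stub,'' but the cleaner and more direct fact — which the paper itself uses, a bit later in the proof that (P1)--(P3) hold — is that $\deg_{\mathcal A}(b_1)=2$ forces $b_1$ to be unbounded with \emph{two unbounded neighbors}, so $b_2$ is automatically unbounded as well and carries no initial charge. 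Thus the only possibly charged stub face is $b_3$ (occurring when $j_{\min}=4$), and that is precisely the face the terminatory rule of Step~1b handles. With that substitution your argument is complete and matches the paper's.
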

\begin{proof}
Consider a bad path $p$ in $G[N_\text{All}]$.
If rerouting in Step 1a) or 1b) is applied, the first two faces $b_1$, $b_2$ of a bad path $p$ are incorporated and the remaining bad path $p-\{b_1,b_2\}$ is handled likewise by Step 1a). In Step 1c), $b_3$ and $b_4$ remain not traversed, however their charge is terminatory dealt with. Consequently, only the rightmost face of $p$ may remain not traversed and sending charge. 
\end{proof}

  %--------------------
  \begin{figure}[ht]
  \centering
    \includegraphics{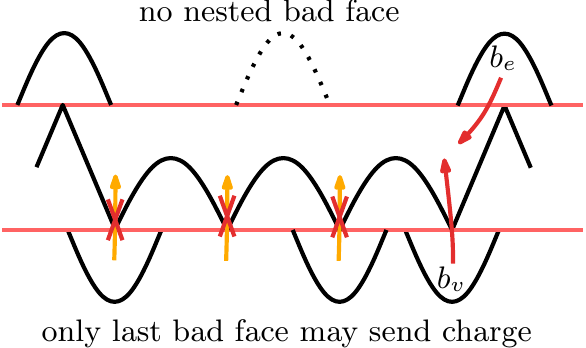}
    \caption{After Step 1, good faces obtain at most two units of charge. In particular, at most one unit  via the rightmost wall vertex and one unit via the rightmost wall edge.}
  \label{fig:ProofStep1}
\end{figure}
%--------------------
\begin{observation}\label{obs:2charge}
 Hence, a good face $f$ may obtain its charge through at most one vertex  and one edge.
\end{observation}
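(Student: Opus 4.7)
The plan is to unfold Observation~\ref{obs:rightSend} through the HeadRule. Recall that the HeadRule sends charge from a bad face $b$ through its leftmost wall edge $e$ (to the face across $e$) and through the right endpoint $v$ of $e$ (to the corresponding good face in the opposite tunnel, which by Figure~\ref{fig:discharging} is the face with $v$ as its rightmost wall vertex). After Step~1, Observation~\ref{obs:rightSend} tells us that every bad face still emitting charge is the rightmost face of its maximal path in $G[N_\text{All}]$.

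For the vertex part I would argue that any vertex charge reaching $f$ must pass through the unique rightmost wall vertex $v^\ast$ of $f$. Two bad faces on the same side of a wall have distinct leftmost wall edges (a wall edge is shared by only one face per side), so distinct right endpoints. By the HeadRule the vertex $v$ used by the sender must coincide with $f$'s rightmost wall vertex; since $v^\ast$ is unique and the sending bad face is determined by $v^\ast$, at most one unit of vertex charge can reach $f$.

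For the edge part I would show analogously that the receiving wall edge must be $f$'s rightmost one. Suppose a bad face $b$ in the opposite tunnel $T'$ sends edge charge to $f$ through $e$, and let $e'$ be a hypothetical wall edge of $f$ strictly to the right of $e$. Let $b^\ast$ denote the face of $T'$ across $e'$. Either $b^\ast=b$, in which case $e$ is not actually the leftmost wall edge of $b$ when restricted to $f$'s shared region and we can slide the analysis to $b$'s rightmost wall edge; or $b^\ast\neq b$, meaning $b^\ast$ lies immediately to the right of $b$ on the wall. In this second situation I would do a short case analysis: if $b^\ast$ is good then (by Observation~\ref{obs:strucProp}\,g) it is in the backbone of $T'$'s caterpillar and traversed, which together with the structure at the shared wall vertex between $b$ and $b^\ast$ forces $e$ to already be $b$'s rightmost wall edge adjacent to $f$; if $b^\ast$ is bad, the shared wall vertex between $b$ and $b^\ast$ together with the maximality of bad paths would extend $b$'s bad path further to the right in $G[N_\text{All}]$, contradicting Observation~\ref{obs:rightSend}. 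Hence no such $e'$ exists and $e$ is $f$'s rightmost wall edge, so $f$ receives at most one unit of edge charge.

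The main obstacle is the case analysis of this second part: concretely, carefully handling configurations in which the wall portion between $b$ and $b^\ast$ (on the $T'$ side) contains intermediate faces of different types, and verifying in each case that the ``both rightmost in their bad paths'' property is incompatible with both faces sending edge charge to the same good $f$. The vertex part is comparatively short once the HeadRule's vertex convention is made explicit via Figure~\ref{fig:discharging}.
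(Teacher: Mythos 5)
Your overall plan---show that after Step~1 the receiving vertex $v$ and the receiving edge $e$ are uniquely determined (each ``rightmost'' in an appropriate sense), hence at most one sender of each kind---is a legitimate alternative to what the paper does. The paper instead assumes two charges of the same kind and derives a contradiction directly: for two vertex charges through $v_l<v_r$, the bad face across $f$'s tunnel edge immediately to the right of $v_l$ is a bounded wall neighbor of $b_l$, lying to its right, contradicting Observation~\ref{obs:rightSend}; for two edge charges, all wall edges of $b_l$ lie between $b_l$'s leftmost and $b_r$'s leftmost, hence inside $f$'s range, so $b_l$ is nested in $f$, contradicting Observation~\ref{obs:nested}. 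Your version and the paper's are morally equivalent for the vertex part, but for the edge part the paper's nesting argument is cleaner and you do not use it at all.

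However, as written your proof has real gaps. For the vertex part, you assert ``by the HeadRule the vertex $v$ used by the sender must coincide with $f$'s rightmost wall vertex.'' That is not part of the HeadRule: the rule only specifies that the sender emits through the right endpoint of \emph{its own} leftmost wall edge, and the receiver is the good face wedged between the two tunnel edges at that vertex. A priori, $v$ can be \emph{any} of $f$'s vertices on that wall, not the rightmost one. The fact that $v$ \emph{is} the rightmost after Step~1 requires a structural argument invoking Observation~\ref{obs:rightSend}: if $v$ were not rightmost, the face across $f$'s tunnel edge just to the right of $v$ is a bounded bad leaf, hence in $N_{\text{All}}$, and it is a wall neighbor of the sender to its right---contradiction. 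You mention \ref{obs:rightSend} as the plan but never actually deploy it here, so the assertion is unsupported.

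For the edge part, your case analysis on $b^\ast$ does not hold up. In the case $b^\ast=b$ your resolution is the phrase ``slide the analysis to $b$'s rightmost wall edge,'' which is not an argument. In the case $b^\ast\neq b$ with $b^\ast$ bad, you claim the shared wall vertex extends $b$'s bad path in $G[N_{\text{All}}]$; but $b$ and $b^\ast$ lie on the \emph{same} side of the wall, and by Observation~\ref{obs:strucProp}\,d) bad faces in the same tunnel are never adjacent, so they cannot be consecutive on a bad path. (The mediating face one would need is the face below their shared vertex, and one then has to argue about \emph{that} face, which you do not.) In fact the claim you are aiming at---that $e$ is $f$'s rightmost wall edge---is true and can be seen directly without \ref{obs:rightSend}: at $v$ (the right endpoint of $e$) the sender $b$ occupies both wall edges, so $v$ is a $\vee$-type vertex and the face $f$ across $e$ has only a tunnel edge to the right of $v$; hence $e$ is $f$'s rightmost wall edge. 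But as it stands your write-up neither gives this direct argument nor the paper's nesting argument, and you yourself flag the case analysis as the unresolved obstacle, so this part must be considered incomplete.
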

\begin{proof}
Consider Figure~\ref{fig:ProofStep1}.
   Suppose $f$ obtains charge from the two faces $b_l$ and $b_r$ through the two vertices $v_l$ and $v_r$ (on the same wall). Wlog suppose that $v_l$ is left of $v_r$ in the wall order. By the tunnel structure, the tunnel curve connects $v_l$ and $v_r$ by forming a sequence of bad faces which are tunnel neighbors of $f$. In particular, the first bad face $b$ after $v_l$ is bounded and a wall neighbor of $b_l$. That is $b\in N_\text{All}$. However, by  Observation~\ref{obs:rightSend}, $b_l$ is the rightmost vertex of a bad path in $G[N_\text{All}]$. This is a contradiction.
  
  Suppose $f$ obtains charge from two faces $b_l$ and $b_r$ through edges. Then, $f$ has two wall edges which are the leftmost wall edges of $b_l$ and $b_r$. Wlog suppose that $b_l$ is left of $b_r$.
  This implies that the bounded bad face $b_l$ is nested within $f$;
  a contradiction to Observation~\ref{obs:nested}.
  
  Hence, a good face $f$ may obtain its charge through at most one vertex $v$ and one edge $e$.
\end{proof}

\begin{claim*}
 After Step 1, (P1)--(P3) hold.
\end{claim*}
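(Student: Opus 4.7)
The plan is to verify (P1), (P2), and (P3) in turn, relying on the structural observations just established together with the description of Step~1. The starting point is that a valid rerouting only enlarges the set of traversed faces, so $N^{(1)}\subseteq N$ and every face of $N^{(1)}$ is bounded, bad, and started with two units of charge. Property~(P3) will follow from a conservation invariant: the \headrule\ and the terminatory redirects of Steps~1b/1c merely reallocate charge, while each incorporation of $b_1,b_2$ (resp.\ $b_{j_{\min}},b_{j_{\min}+1}$) removes two faces from $N$ and simultaneously deletes their four units of charge. Hence the quantity $\sum_f ch(f)-2|N_{\text{current}}|$ remains constant at $0$ throughout Step~1, giving $\sum_f ch(f)=2|N^{(1)}|$ at the end.

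For (P1), I would take $f\in N^{(1)}$ and establish that $f$ loses both its initial units and receives no new charge. For the loss: $f$ is bad and was not incorporated, so by Observation~\ref{obs:rightSend} it is either still a \headrule\ sender---in which case it is the rightmost face of its bad path and its two units depart along its leftmost wall edge and the right vertex of that edge---or it is one of the terminatorily discharged faces of Steps~1b/1c such as $b_3$ or $b_4$, which again loses both units. For the non-reception: by Observation~\ref{obs:charge} vertex charges only reach good faces, ruling out any vertex contribution to the bad $f$; an edge contribution would require a bad sender $s$ across one of $f$'s wall edges, forcing $s$ and $f$ to lie in the same bad path of $G[N_\text{All}]$. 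Combining the rightmost characterisation of current senders with the wall order along this shared path then rules out $s$ routing its leftmost wall edge through $f$.

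For (P2), I would case-split on the type of recipient $f$. If $f$ is good and traversed, Observation~\ref{obs:2charge} immediately gives $ch(f)\leq 1+1=2$. If $f$ is bad and traversed, then Observation~\ref{obs:charge} confines every contribution to an edge charge, and the wall-edge contiguity of a bad face together with Observation~\ref{obs:nested} limits the number of senders that can aim their leftmost wall edge at $f$. If $f$ is unbounded, the \headrule\ contributions and the terminatory contributions of Steps~1b/1c must be tallied together; the step conditions are tuned so that each unbounded recipient collects at most two units overall. The hard part, I expect, will be precisely this third subcase: an unbounded face at the end of a bad path can be hit by a \headrule\ edge charge from its bounded bad neighbour and by a terminatory charge from $b_3$ (or $b_4$), and keeping the total within the budget of~$2$ should hinge on Observation~\ref{obs:strucProp2}e) together with the specific endpoint structure encoded in Steps~1b/1c.
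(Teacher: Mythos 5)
Your overall blueprint mirrors the paper's: case-split on face type, invoke Observations~\ref{obs:charge}, \ref{obs:2charge}, \ref{obs:rightSend}, \ref{obs:nested} to bound incoming charge, and track conservation for (P3). The conservation invariant for (P3) and the good/bad traversed cases of (P2) are fine, and your reliance on Observation~\ref{obs:2charge} there is exactly the paper's. But there is a genuine gap in the last step of your (P1) argument.

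You correctly reduce to a bad sender $s$ sharing a wall edge with $f$, forcing $s$ and $f$ into the same bad path of $G[N_\text{All}]$, and you invoke Observation~\ref{obs:rightSend} to place $s$ rightmost. The claim that ``the rightmost characterisation of current senders with the wall order along this shared path then rules out $s$ routing its leftmost wall edge through $f$'' is, however, not a valid deduction: if $s$ is rightmost and $f=b_{m-1}$ is its left bad neighbor, then $s$'s leftmost wall edge \emph{is} precisely the edge shared with $f$, so the wall order is consistent with --- not contrary to --- $s$ sending its HeadRule charge to $f$. The contradiction is not structural but operational, and it requires a case split on how Step~1 processed the path. If $s$ is the odd leftover of the Step~1a/1b recursion, then $f=b_{m-1}$ was incorporated by that recursion, so $f\notin N^{(1)}$ and the scenario never arises. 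If instead $s=b_4$ in Step~1c, then the terminatory discharge \emph{overwrites} the HeadRule for $b_4$, so nothing is sent along $s$'s leftmost wall edge at all; the charge goes to the unbounded $b_2$. This is precisely the paper's argument (``$f=b_3$ of some bad path $p$ with $|p|=4$ \dots however, charge of $b_4$ is sent to $b_2$ and not to $b_3$''). Your write-up needs to name which rule intervenes rather than appeal to the wall order.

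You explicitly acknowledge the unbounded subcase of (P2) as unfinished. Two remarks: Observation~\ref{obs:strucProp2}e) is not the tool here, and the double-counting you worry about (a HeadRule edge charge plus a terminatory charge on the same unbounded face) cannot occur, because the terminatory rule \emph{replaces} rather than supplements the HeadRule. Concretely, $b_1$'s only wall neighbor is $b_2$, which is unbounded and hence uncharged, so $b_1$ receives nothing from the HeadRule; $b_2$'s only HeadRule-edge sender is $b_3$, whose charge in Step~1c is itself terminatorily redirected to $b_1$. Hence each of $b_1,b_2$ sees only the terminatory two units, as the paper states.
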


\begin{proof}
  First, we show that in Step 1 b) or 1c), charge is only sent to unbounded (not traversed) faces: By construction, charge may have been sent to the first two faces $b_1$ and $b_2$ of a bad path $p$ only if deg$_\mathcal A (b_1)=2$. Since a face of degree 2 is unbounded and has exactly two unbounded neighbors, $b_1$ and $b_2$ are unbounded (and not traversed by construction).\\[-6pt]

  Now, we show (P1), that is $ch(f)=0$ for all $f\in N^{(1)}$.
  Suppose a bounded not traversed face $f$ obtains charge. By Observation~\ref{obs:strucProp2}a), $f$ is bad. This charge is assigned by the \headrule \ or by Step 1. By the observation above, the charge is assigned by the \headrule. By Observation~\ref{obs:charge}, the bad face $f$ obtains its charge through an edge. This gives two adjacent bounded not traversed faces. By construction of Step~1, all pairs of adjacent bad faces are incorporated, unless $|Q|\leq 4$. Consequently, these  can only survive in Step 1c). Hence, $f= b_3$ of some bad path $p$ with $|p|=4$ handled in Step 1c). However, charge of $b_4$ is sent to $b_2$ and not to $b_3$, a contradiction.\\[-6pt]
 
  Next, we show (P2), that is $ch(f)\leq 2$ for all $f\in F$.
  By (P1), bounded not traversed faces are free of charge. It remains to analyze the charge of unbounded not traversed faces and traversed faces.
  
  Consider an unbounded not traversed face $f$. By Observation~\ref{obs:strucProp2}b), $f$ is bad. By Observation~\ref{obs:charge}, the bad face $f$ may obtain one unit of charge through its rightmost wall edge by the \headrule. In Step 1, unbounded not traversed faces $b_1$ and $b_2$ may obtain two units of charge from $b_3$ or $b_4$, including the unit of the \headrule which would have been sent from $b_3$ to $b_2$ in any case. Consequently, $f$ obtains at most two units of charge.
  
  Now, consider a traversed face $f$. 
  If $f$ is bad, then it obtains at most one unit of charge in \headrule\ through an edge, by Observation~\ref{obs:charge}.
  If $f$ is good, then by Observation~\ref{obs:2charge}, it may obtain its charge through at most one vertex $v$ and one edge $e$. This yields at most two units. (Note that $v$ is the rightmost vertex on its wall adjacent to $f$, and that $e$ is the rightmost wall edge of $f$.) \\[-6pt]
  
  Lastly, (P3) holds due to the fact that while updating the status of a face, switching from $N$ to $T$, its charge was deleted. Moreover, this is the only occurring status change.
\end{proof}

\subsubsection*{Rerouting and Discharging Step 2}
It remains to guarantee that faces in $T$  obtain at most one unit of charge.
\setcounter{step}{1}
\begin{step}
We construct a valid rerouting $\mathcal P^{(2)}$ of $\mathcal P^{(1)}$ and discharge such that 
 \begin{itemize}[itemsep=0pt]
  \item[(Q1)] $ch(f)=0$ for all $f\in N^{(2)}$,
  \item[(Q2)] $ch(f)\leq 1$ for all $f\in T^{(2)}$, 
  \item[(Q3)] $ch(f)\leq 2$ for all $f\in U$, and
  \item[(Q4)] $\sum_f ch(f)=2|N^{(2)}|$.
\end{itemize}
 \end{step}
 In order to obtain the wished properties, it only remains to 
consider bounded traversed faces obtaining charge of more than 1 unit. 
By Observation~\ref{obs:charge} and \ref{obs:2charge}, a (bounded) traversed face $f$ with charge of more than one unit, obtains its charge through exactly one vertex from  a face $b_v$ and one edge from a face $b_e$, which are in different tunnels, see Figure~\ref{fig:ProofStep1}. In this case, we either redistribute the charge or incorporate the two not yet traversed faces $b_v$ and $b_e$.

Let $f$ be a face in tunnel $\mathcal T_i$ then without loss of generality $b_e$ is in tunnel $\mathcal T_{i-1}$ and $b_v$ is in $\mathcal T_{i+1}$.
Denote the first and second good face after $f$ within its tunnel (in the left right order of the original $P_i$) by $f_1$ and $f_2$. The predecessor, successor and second successor of $f$ in the possibly modified $P_i$, we denote by $pr$, $s_1$, and $s_2$. 
Since $b_e$ is a face of degree $\geq 3$, these faces exist. In particular, the existence of $f_2$ implies the existence of all other faces as well.

By Observation~\ref{obs:rightSend}, $b_e$ and $b_v$ were the rightmost bad faces of a bad path. 
This implies that $f_1$ is a good face with exactly one vertex on the opposite tunnel wall from its wall edges, otherwise $b_e$ would not have been last of a bad path. 
Consequently, the edge $(f,f_1)$ was not replaced in Step 1.  Hence, $f_1=s_1$.
We make a case distinction based on whether or not $(f_1,f_2)$ is an edge in some path.

\begin{step*}[\bfseries 2a]
If  $(f_1,f_2)$ is an edge in $P_i$, then $f_2=s_2$. Let $P'$ be the prefix of $P_i$ until $pr$ and $P''$ be the suffix of $P_i$ starting after $f_2$. We redefine $P_i$ as follows: 
$$P_i:=P',pr,b_v,f_1, f, b_e,f_2,P''$$
Since $b_v,b_e$ are now traversed, their status switches from $N$ to $T$ and their charge is deleted.
\end{step*}
\begin{step*}[\bfseries 2b]
If $(f_1,f_2)$ was replaced in Step 1 (when some $b_1$,$b_2$ where incorporated), we send one unit of charge from $f$ to $b_1$.
\end{step*}
%--------------------
\begin{figure}[hbt]
\centering
\includegraphics[width=.85\textwidth]{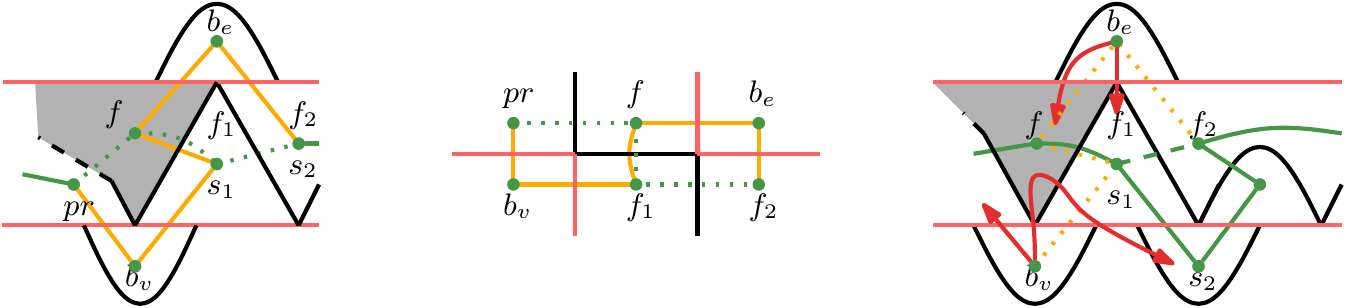}
\caption{Left: Rerouting in Step 2 -- bad faces across the tunnel are incorporated by replacing the dotted green edges by the orange marked path. Middle: Schematic rerouting of Step~2.\\
Right: Discharging in Step 2 -- One unit of $f$ (from $b_v$) is sent to $s_2$.}
\end{figure}
%--------------------
\begin{claim*}
 $\mathcal P^{(2)}$ is a valid rerouting of $\mathcal P^{(1)}$.
\end{claim*}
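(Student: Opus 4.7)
The plan is to verify the two defining properties of a valid rerouting---glueability of $\mathcal{P}^{(2)}$ and the inclusion $T^{(1)} \subseteq T^{(2)}$---mirroring the structure of the earlier proof that $\mathcal{P}^{(1)}$ is a valid rerouting of $\mathcal{P}$. Since Step~2b only redistributes charge and does not modify any path, every path modification happens in Step~2a, which locally replaces the sub-path $pr, f, f_1, f_2$ of some $P_i^{(1)}$ by $pr, b_v, f_1, f, b_e, f_2$. The inclusion $T^{(1)} \subseteq T^{(2)}$ is immediate from this form: each of $pr, f, f_1, f_2$ still appears in the new sub-path, and the two inserted bad faces $b_v, b_e$ only enlarge $T$.

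For glueability, the endpoint adjacencies between consecutive paths are preserved since Step~2a modifies only the interior of $P_i$. Pairwise disjointness of the paths requires knowing that $b_v, b_e \in N^{(1)}$: by Observation~\ref{obs:rightSend}, every bad face still sending charge at the end of Step~1 was the rightmost face of a bad path in $G[N_\text{All}]$, and such faces are left untraversed by Step~1. Moreover, no bad face can be inserted into two distinct paths by Step~2a, because incorporating $b_v$ deletes its outgoing charges, so the other face that was receiving charge from $b_v$ either is no longer overcharged or is resolved via Step~2b (which inserts no bad face); an identical statement holds for $b_e$.

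The one substantive remaining task is to confirm that $pr, b_v, f_1, f, b_e, f_2$ is indeed a walk in $G^*(\A)$. The edge $f \to b_e$ is precisely the wall edge through which $b_e$ charged $f$, and $f_1 \to f$ was already an edge of $P_i^{(1)}$. For the remaining three adjacencies I plan to use a common-vertex argument analogous to the one employed for $\mathcal{P}^{(1)}$: the four faces meeting at the shared vertex $v$ of $f$ and $b_v$ are exactly $f$, $b_v$, and two ``side'' faces, which---by $b_v$'s rightmost-in-its-bad-path property (Observation~\ref{obs:rightSend})---are forced to be $pr$ on the left and $f_1$ on the right, giving $pr \to b_v$ and $b_v \to f_1$ via the two wall edges incident to $v$. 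An identical analysis at the right endpoint of $b_e$'s leftmost wall edge yields $b_e \to f_2$. The main obstacle, I expect, lies in the identification of $pr$ with this upper-left face: one has to trace how $P_i$ could have been altered in Step~1, using crucially the already-stated fact that the edge $(f, f_1)$ is not replaced in Step~1.
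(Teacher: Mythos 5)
Your high-level decomposition (adjacency, disjointness, $T^{(1)}\subseteq T^{(2)}$) matches the paper's, and your observations that Step~2b changes no path, that $b_v,b_e\in N^{(1)}$ via Observation~\ref{obs:rightSend}, and that no bad face can be double-inserted because incorporation deletes charge, are all sound. However there is one genuine gap: the statement that $T^{(1)}\subseteq T^{(2)}$ is ``immediate'' because each of $pr, f, f_1, f_2$ reappears relies on the old sub-path between $pr$ and $f_2$ being exactly $pr,f,f_1,f_2$ at the moment Step~2a fires. You correctly flag that Step~1 could have disturbed $(f,f_1)$ and cite the fact that it doesn't, but you never address the possibility that an \emph{earlier application of Step~2a itself}, on a different overcharged face within the same path $P_i$, already removed $(f,f_1)$ or $(f_1,f_2)$. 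If that happened, $P'$ and $P''$ would not account for some already-traversed faces, and the replacement would silently drop them from the path. The paper resolves this with a short but essential argument: by the wall-edges-on-one-side property, the only way an earlier Step~2a could have removed $(f,f_1)$ or $(f_1,f_2)$ is if the edge played the opposite role ($(f,f_1)$ in the role of $(f_1',f_2')$ or vice versa), and in either such case one of $b_v, b_e$ would have been incorporated at that earlier point, contradicting $b_v,b_e\in N^{(1)}$. Without this case analysis the proof of $T^{(1)}\subseteq T^{(2)}$ (and hence validity of the rerouting) does not go through.

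A second, smaller point: the three nontrivial adjacencies ($pr\text{--}b_v$, $b_v\text{--}f_1$, $b_e\text{--}f_2$) are left as a plan. The plan is the right one and mirrors the paper's vertex-of-degree-4 argument, but as written it is not yet a proof; in particular for $pr\text{--}b_v$ the paper distinguishes the cases where $b_v$ was an isolated bad vertex of $G[N_{\text{All}}]$ versus the rightmost face of a longer bad path whose left neighbor was absorbed as $b_2$ in Step~1, and your ``trace how $P_i$ could have been altered'' would need to become precisely that case split.
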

\begin{proof}
We need to argue that rerouting Step(2a) is well-defined. 
First of all, we show that all faces of the new path are adjacent. $(f_1,f)$ and $(f,b_e)$ are edges by definition. $b_v$ and $f_1$ are adjacent since $f$ and $b_v$ share no edge but a vertex $v$ (of degree 4) which is the rightmost wall vertex of $f$. Moreover,  $f_1$ is adjacent to $v$ since $f_1$ is the next good tunnel face after $f$.

The adjacency of $b_e$ and $f_2$ can be seen as follows: As discussed before, $f_1$ has exactly one vertex $v_e$ on the opposite tunnel wall from its wall edges. Hence, $v_e$ is the endpoint of the two tunnel edges of $f_1$ and hence shared with $f$ and $f_2$. Moreover, $v_e$ is the last vertex of $f$ on the wall of its wall edges. Hence, $v_e$ is also adjacent to $b_e$. As $v_e$ is of degree 4, $b_e$ and $f_2$ must be adjacent. 

It remains to show that $pr$ and $b_v$ are adjacent. 
% We consider two cases: Either $pr$ is a good face or $pr$ is a bad face.
% 
% In the first case
% 
% In the second case
% 
% Thus is both caes $pr$ and $b_v$ are adjacent.
We claim that $pr$ is the (penultimate) wall neighbor of $b_v$:
By Observation~\ref{obs:rightSend}, $b_v$ was the last vertex of a bad path in $G[N_{All}]$.
If $b_v$ was a bad path of length 1 in $G[N_{All}]$, then $f$ has exactly one vertex on the tunnel wall opposite from its wall edges and $pr$ is wall neighbor of $b_v$. Otherwise, the left bad neighbor of $b_v$ has been included in Step 1 as the role of $b_2$ and is now the predecessor of $f$, namely $pr$.  Thus in both cases, $pr$ and $b_v$ are adjacent.

Next, we show that neither the edge $(f,f_1)$ nor the edge $(f_1,f_2)$ was removed from $P$ earlier by Step~2.
Suppose one of the edges $(f,f_1)$ and $(f_1,f_2)$ were replaced during Step 2. By the fact that faces have wall edges on exactly one side, the interesting cases are if the edges were replaced by the opposite role from the now considered step, i.e., $(f,f_1)$ takes the role of $(f_1,f_2)$ or vice versa.
In both cases, by construction of Step 2, one of the bad faces $b_v$ and $b_e$ gets incorporated. This is a contradiction to the fact that $b_e$ and $b_v$ are not traversed.

By definition, we replace two edges by a path traversing all already traversed faces and additionally, the two not yet traversed faces. Hence, $T^{(1)}\subseteq T^{(2)}$.
Since the starting and ending face of each $P_i$ remains, and only not yet traversed faces are incorporated, $\mathcal P^{(2)}$ is glueable.
\end{proof}

\begin{claim*}
 (Q1)-(Q4) of Step 2 hold.
\end{claim*}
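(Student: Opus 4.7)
The plan is to verify (Q1)--(Q4) in turn, each time reading off the local effect of the two substeps of Step~2 and combining it with (P1)--(P3) from Step~1.

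For (Q1), I would first observe that Step~2 only enlarges the traversed set, so $N^{(2)}\subseteq N^{(1)}$; since no action of Step~2 sends charge to a face still in $N^{(2)}$, property (Q1) is inherited from (P1). For (Q4), I would check the substeps separately. In Step~2a the faces $b_v$ and $b_e$ move from $N^{(1)}$ to $T^{(2)}$ and the two units of charge contributed by each of them (which, by the head rule, currently sit on $f$ and on one further face per charger) are deleted, so the total charge drops by~$4$ while $|N|$ drops by~$2$. Step~2b only redistributes one unit from $f$ to $b_1$ and therefore changes neither $|N|$ nor the total charge. Combined with (P3), this gives (Q4).

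For (Q3) it is enough to check that Step~2 never deposits charge on an unbounded face, so (Q3) is inherited from (P2). Step~2a does not send any new charge; Step~2b sends one unit to $b_1$, and I would argue that $b_1$ is bounded. The bad faces that Step~1 ever incorporates are, in the notation of that step, either $b_1,b_2$ with $\deg_{\mathcal A}(b_1)\geq 3$ (Step~1a, hence bounded) or $b_{j_{\min}},b_{j_{\min}+1}$ with $j_{\min}\geq 2$ (Step~1b); by Observation~\ref{obs:strucProp}(e) the only unbounded bad face of a tunnel sits at the first position of a bad path, so none of these incorporated faces is unbounded.

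The hard part will be (Q2). By (P2) all charges are already bounded by~$2$, so only the traversed bounded faces carrying $2$ units matter. By Observations~\ref{obs:charge} and~\ref{obs:2charge} these are exactly the good faces $f$ identified at the start of Step~2, each receiving one unit through its rightmost wall vertex from a face $b_v$ in one adjacent tunnel and one unit through its rightmost wall edge from a face $b_e$ in the other adjacent tunnel. Step~2a resolves such an $f$ by incorporating both $b_v$ and $b_e$, whose charges (including the two units sitting on $f$) are then deleted, leaving $f$ with $0$ units. Step~2b resolves $f$ by offloading one unit to $b_1$, leaving $f$ with $1$ unit. The technical heart of the argument is to verify that in the latter case $b_1$ does not accumulate more than~$1$ unit. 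I would argue this in two parts: (i)~$b_1$ is bad and traversed, so by Observation~\ref{obs:charge} it cannot receive any vertex charge, and by Observation~\ref{obs:rightSend} together with the characterization of the bad faces still sending charge after Step~1, the unique wall edge through which $b_1$ could receive a head-rule unit leads back to the face $f_\text{enter}$ that was already consumed when $b_1$ was inserted into the path; and (ii)~the leftmost wall edge of $b_1$ uniquely determines the good face $f_1$ adjacent to it via that wall and hence the face $f$, so no two distinct applications of Step~2b can deposit charge into the same $b_1$. Piecing (i) and (ii) together yields $\mathrm{ch}(b_1)\leq 1$ and therefore (Q2). This local case analysis is precisely where the careful construction of Step~1 pays off.
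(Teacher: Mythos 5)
Your decomposition matches the paper's exactly: (Q1) and (Q4) are easy to inherit, (Q3) reduces to where Step~2b deposits its unit, and (Q2) is the heart, split into good traversed faces (resolved by Step~2a or~2b) and bad traversed faces (the Step~2b recipients). Your (Q1), (Q4) bookkeeping and the good-face half of (Q2) are fine, and your argument for (Q3) is in fact a slight simplification of the paper's: you show that the $b_1$ targeted by Step~2b is always \emph{bounded} (since in Step~1a it has $\deg_{\mathcal A}\geq 3$, in Step~1b it has index $j_{\min}\geq 2$ in a path of length $\geq 5$, and recursive calls never start at the first or last position of a maximal bad path), whereas the paper admits the possibility of a right-unbounded recipient and bounds its charge by~$2$ separately. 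Both are valid.

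There is, however, a concrete error in your part~(i) of the (Q2) argument. You claim that the wall edge through which $b_1$ could receive a head-rule unit ``leads back to the face $f_{\text{enter}}$''. That is not the face sitting on the other side of $b_1$'s rightmost wall edge: $f_{\text{enter}}$ is the \emph{penultimate} wall neighbor of $b_1$, and is a good face that was already in the path before the rerouting (so ``consumed when $b_1$ was inserted'' is also not an accurate description of it). The face across $b_1$'s rightmost wall edge is its bad successor $b_2$ in the bad path---the very face incorporated together with $b_1$ in Step~1---and the correct reason $b_1$ carries no head-rule charge is that this $b_2$ is traversed after Step~1 and hence never a sender. This is exactly the paper's phrasing (``its bad neighbor is also traversed after Step~1''). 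Your conclusion is right but the mechanism you cite is wrong; similarly, in part~(ii) the relevant wall edge of $b_1$ that identifies $f_1=f_{\text{enter}}$ is its second-to-rightmost, not its leftmost. These slips don't break the proof---the injectivity of the map $f\mapsto b_1$ and the vanishing of head-rule charge on $b_1$ both still hold---but the face identifications should be corrected.
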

\begin{proof}
Note that within Step 2, charge is sent to faces which were first of a bad path in $G[N_\text{All}]$, not left unbounded, and traversed after Step 1.\\[-6pt]

Property (Q1) holds before Step 2. Within Step 2, no charge is sent to not traversed faces.\\[-6pt]

Now we show property (Q2), that is, bounded traversed faces obtain at most one unit of charge. We distinguish good and bad traversed faces.

Good faces obtaining more charge were eliminated by Step 2: Either both bad faces sending charge are incorporated and their charge is deleted or one unit of charge was redistributed.

Recall that after Step 1, bad traversed faces obtained at most one unit of charge. Consider a bad face $b$, which obtained charge in Step 2. Note that $b$ is free of charge before Step 2, since its bad neighbor is also traversed after Step 1. The sent unit  can be associated with rerouting in Step 1 and therefore with the rightmost wall edge of $b$.
Hence, it is at most one unit.\\[-6pt]

Property (Q3) held before Step 2. By the above observation, during Step~2 charge is not sent to left unbounded faces. Hence, it remains to check right unbounded bad faces. Since the extra charge of Step 1 is sent only to left unbounded faces, the above reasoning applies to right unbounded bad faces as to bounded bad faces.\\[-6pt]

We now show property (Q4). By construction, whenever a face changes its status from not traversed to traversed, its charge is deleted. Due to the valid rerouting, no other status changes occur. Hence, (Q4) holds.
\end{proof}

This establishes the claimed properties, and Theorem~\ref{thm:mono} follows as shown before.\\

\begin{remark}
	It was pointed out in \cite{cellpaths} that $n^2/3 + O(n)$ is tight. The argument goes as follows: The dual graph of a line arrangement is bipartite and there exist line arrangements \A \ (introduced by F\"uredi and Pal\'asti \cite{furedi},) with roughly $1/3$ black and $2/3$ white faces. As every second visited face must be  black the longest path has length of approximately $2/3$ of the total number of faces.
	
	However, does this upperbound also hold if the number of black and white faces is almost equal? The answer is yes. 
	To see this we define a line arrangement \A' as follows: Consider an arrangement with \A\ with roughly $1/3$ black and $2/3$ white faces and insert a black point in every black face and a white point in every white face. Perturb the points such that they are in general position, if they haven't been so already. 
	Let the line $\ell$ be a ham-sandwich cut for this set of points. Now, \A' is the arrangement \A\ with the line $\ell$ inserted. Note that the number of black faces and the number of white faces agree up to lower order terms. 
	We argue that any alternating path is still of length at most $n^2/3 + O(n)$. To see this, consider an alternating path $P$ in \A' and remove the edges that cross $\ell$ and also the end vertices to these edges. Note that we get a set of at most $n$ paths that all live in \A . Except for the last face of each dual path, half of the vertices must be black. And thus all the $n$ paths together have length at most $n^2/3 + O(n)$ and hence also the original path $P$ in \A' has length $n^2/3 + O(n)$.
\end{remark}

\section{Upper Bound Example}\label{sec:UBE}

\begin{figure}[htb]
  \centering\includegraphics[width=.85\textwidth]{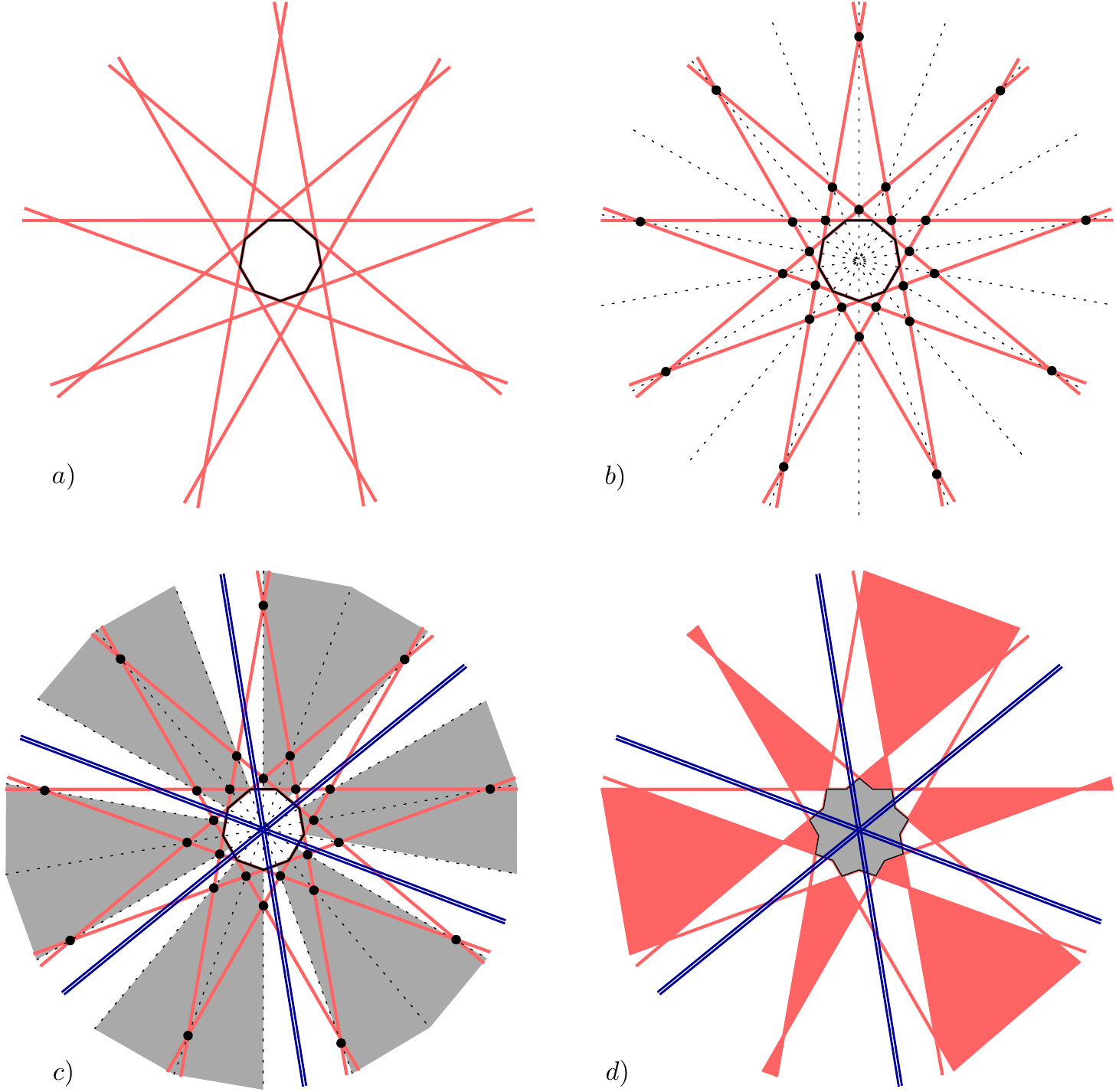}
  \caption{The red lines are the extensions of a regular $3n$-gon. The dotted auxiliary lines are added to define slabs. }\label{fig:slabs}
\end{figure}
In this section, we study upper bounds on the length of alternating paths in bicolored arrangements. In particular, 
we describe a line arrangement \A \ with $3n$ red and $2n$ blue lines with no alternating path longer than $14n$.
The main idea of the construction is to separate blue pairs of lines by red lines. 
%This is however not entirely possible, as the line arrangement must be in general position. 

\upbi*

%\begin{theorem}
%Let $n$ be an odd natural number.	There exists a simple line arrangement of $3n$ red and $2n$ blue lines which has no alternating  dual path longer than $14n$.
%\end{theorem}

\begin{proof}
We first describe the red arrangement and show some interesting properties of it.
Let $k=3n$.  See Figure~\ref{fig:slabs} for an illustration of the construction. 
The red arrangement is the extension of the sides of a regular $3n$-gon. We require $k$ and consequently $n$ to be odd.
Thereafter we draw $k$ auxiliary dotted lines, which are not part of the arrangement. Each dotted line goes from one vertex of the $k$-gon through the midpoint of the opposite side of the $k$-gon. In particular, for any two sides of the $k$-gon there exists a dotted line bisecting it. 
The red lines together with the dotted lines are, up to projective transformation, the Böröczky-example, which minimizes the number of ordinary crossings~\cite{crowe1968sylvester,tao}. 
%For the delight of the reader, we repeat some observations.
In the following we state some observations. For the delight of the reader we also repeat observations that might be well-known.

\begin{observation}\label{clm:slabs}
For each red line holds, that whenever it crosses a dotted line it also crosses a red line. Except once when the dotted line crosses the side of the $k$-gon.\end{observation}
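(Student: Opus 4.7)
The plan is to exploit the reflective symmetry of the regular $k$-gon. Since $k$ is odd, through each vertex $v_j$ of the $k$-gon there is a unique axis of symmetry of the $k$-gon: the line through $v_j$ perpendicular to the opposite side. This axis passes through the midpoint of the opposite side, so it coincides with the dotted line $d_j$. Consequently, the reflection $\sigma_j$ across $d_j$ is a symmetry of the whole $k$-gon, and it permutes the red lines (extensions of the sides) among themselves.

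I would then fix a red line $\ell$ extending a side $s = v_i v_{i+1}$ and an arbitrary dotted line $d_j$, and look at the point $p := \ell \cap d_j$. Since $p$ lies on the axis, $\sigma_j(p) = p$, and since $p$ also lies on $\ell$, it must lie on the red line $\ell' := \sigma_j(\ell)$. If $\ell' \neq \ell$, then $p$ is a triple point on $\ell$, $\ell'$, and $d_j$, which gives the required second red line through the crossing.

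The remaining case is $\sigma_j(\ell) = \ell$ as a set. Here the claim is that $p$ must be the midpoint of $s$. I would establish this via the sub-fact that $v_j$ is the \emph{only} vertex of the $k$-gon lying on $d_j$: the other $k-1$ vertices pair up under $\sigma_j$, and no further vertex can be fixed because $k$ is odd. In particular neither $v_i$ nor $v_{i+1}$ is fixed by $\sigma_j$; but since $\{v_i, v_{i+1}\}$ is preserved as a set (being the endpoints of $s = \sigma_j(s)$), the two endpoints must be swapped. The unique fixed point of $\sigma_j$ on $\ell$ is then the midpoint of $v_i v_{i+1}$, which is exactly the midpoint of $s$. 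Finally, this setwise-fixed case occurs for exactly one index $j$, namely the vertex opposite to $s$, which is precisely the dotted line bisecting the side of the $k$-gon, matching the single exception in the statement.

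I do not see a real obstacle: the argument is a clean application of symmetry, and the only step that deserves care is the parity argument ruling out any other vertex on $d_j$, which I would spell out explicitly in the final write-up.
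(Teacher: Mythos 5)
Your argument is correct and it is somewhat more direct than the paper's. The paper proves the observation by a counting argument: it notes that the angle bisector of any two red lines is a dotted line, so each of the $k-1$ red--red intersection points on a fixed red line $\ell$ lies on a (distinct) dotted line; since $\ell$ meets all $k$ dotted lines in $k$ distinct points, exactly one red--dotted crossing on $\ell$ is unaccounted for, and the paper simply asserts this leftover crossing lies on the $k$-gon. You instead argue pointwise: for each dotted line $d_j$, the reflection $\sigma_j$ either moves $\ell$ to a different red line $\ell'$, forcing $\ell\cap d_j=\ell\cap\ell'$ to be a red--red--dotted triple point, or fixes $\ell$ setwise, in which case $\ell\cap d_j$ is the midpoint of the side (on the $k$-gon) and this happens for exactly one $j$. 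Both proofs rest on the same core fact -- that the dotted lines are the $k$ reflection axes of the regular $k$-gon when $k$ is odd -- but your version buys a cleaner identification of the exceptional crossing: where the paper deduces its location only implicitly from the count, you exhibit it explicitly as a fixed point of the reflection, and you also make explicit the uniqueness of the exceptional dotted line per red line, which the paper leaves tacit. The only point you should spell out in a final write-up is why $\sigma_j(\ell)=\ell$ forces $\ell\perp d_j$ (ruling out $\ell$ parallel to but distinct from $d_j$), but this is immediate.
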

\begin{proof}%[proof of Claim~\ref{clm:slabs}]
 The intersection point of any two red lines lies on a dotted line. This can be seen by considering the bisector of the two red lines, which is a dotted line. Every red line intersects $k-1$ other red lines.
 Every red line is crossed at $k$ different points by a dotted line. This holds because the considered lines are not parallel.
 The only point a red line crosses a dotted line without crossing a second red line simultaneously is on the regular $k$-gon.
 \end{proof}
 
 A \emph{slab} is one of the $2k$ maximal unbounded regions formed by the $n$-gon and the dotted lines, see Figure~\ref{fig:slabs}~$c)$. We mark every third of them and refer to them as \emph{marked slabs}. Here we use that $k$ is divisible by three.
 The \emph{middle part} of the construction is the union of red faces marked gray in Figure~\ref{fig:slabs}~$d)$.
 The blue lines are inserted as (almost) parallel pairs called \emph{twins} such that they cross dotted lines within the original $k$-gon and they lie entirely within two opposite marked slabs. 
 We perturb the blue lines slightly such that they are not parallel. 
%The \emph{rays} of the construction are the slabs a twin is in and all induced faces, see Figure~\ref{fig:ray}~$b)$.
Within a slab we can order all faces according to the distance of the original $k$-gon. This ordering defines naturally two directions for a dual path.
%\newpage
 \begin{figure}[htbp]
  \centering\includegraphics[width=.85\textwidth]{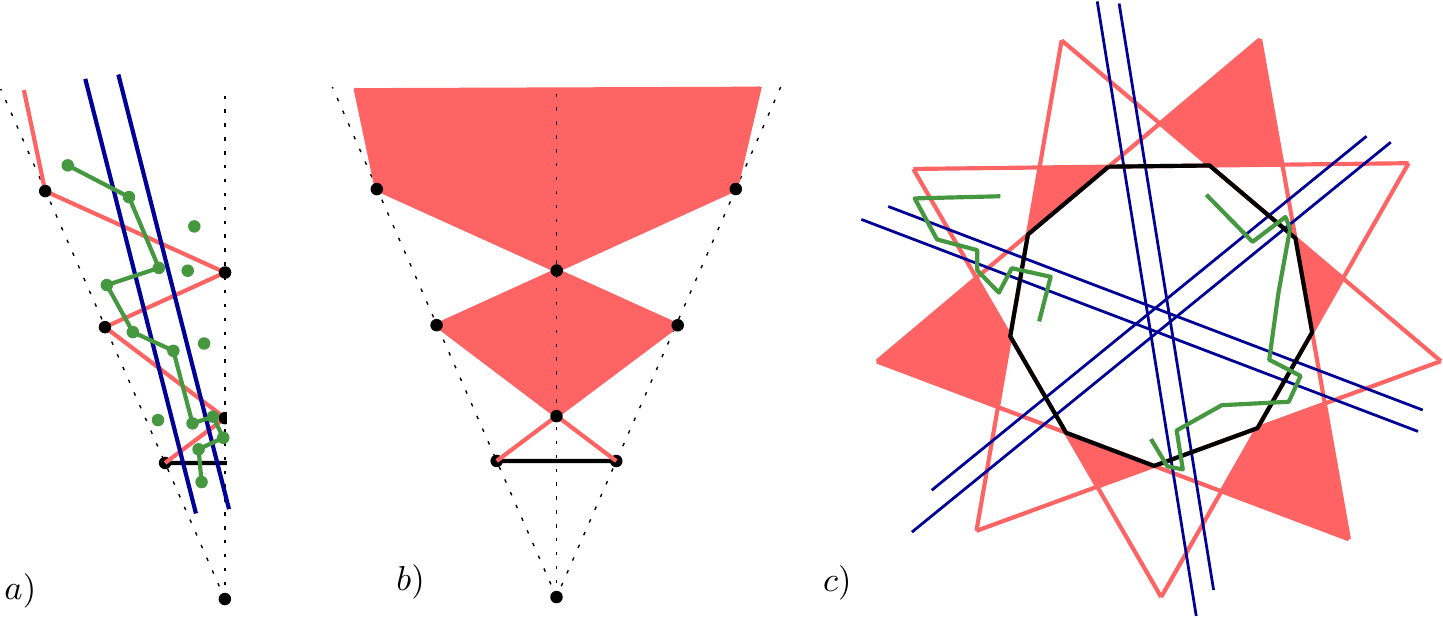}
  \caption{a) the marked slab with an alternating  dual  path. b)  separation of marked slabs by red faces. c) zoom in of the middle part with various alternating   dual paths.}\label{fig:ray}
\end{figure}

 \begin{observation}\label{clm:chdir} Any alternating  dual path within a marked slab cannot change its direction.
 \end{observation}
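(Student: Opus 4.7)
My plan is to show that, within a marked slab, the slab's level function can only change when the dual path crosses a red edge, which immediately rules out direction reversals under alternation.

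First, using Observation~\ref{clm:slabs}, I would argue that no two red lines cross in the interior of a marked slab: their pairwise crossings lie either on one of the two dotted lines bounding the slab or on the $k$-gon itself. Hence, restricted to the slab, the red lines form a non-crossing family and stratify the slab into linearly ordered \emph{strips}, one per red line. Each face $f$ in the slab acquires a well-defined level, namely the number of strips between $f$ and the $k$-gon, and this is precisely the order on faces introduced just before the observation.

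Next, I would track how the level function changes across the edges of a face $f$. A red edge of $f$ lies on a red line and therefore separates $f$ from a face in an adjacent strip, changing the level by exactly $\pm 1$. A blue edge of $f$ lies on one of the two almost-parallel blue twin lines passing through the slab, which run essentially along the slab's axis: a twin line enters and leaves the slab only at the bounding dotted lines, so a blue edge is bounded by two vertices on the twin line and separates $f$ from a face in the \emph{same} red strip, leaving the level unchanged. Hence along any dual path inside the slab, the level changes precisely at red-edge crossings.

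Finally, suppose an alternating dual path inside the slab changes direction at a face $p_i$. Then the level function attains a local extremum at $p_i$, so the two edges $p_{i-1}p_i$ and $p_ip_{i+1}$ must both change the level, in opposite senses. By the previous step, both of these edges must therefore be red, contradicting the alternating property. Thus no direction change can occur within a marked slab.

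The main obstacle is the second step: rigorously establishing that crossing a blue twin edge stays within a single red strip. This rests on the placement of blue twins as almost-parallel pairs spanning two opposite marked slabs and on the non-crossing structure of red lines inside the slab, both of which need to be combined with care to ensure that every blue edge is ``lateral'' in the slab's level order.
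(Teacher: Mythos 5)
Your approach is genuinely different from the paper's and, as written, has a gap in the final step --- and interestingly, the step you flag as the main obstacle (blue edges being level-preserving) is fine, while the step you treat as routine is the one that fails. The paper's proof rests on the observation that the two twin blue edges at a given level bound a \emph{common} face $f$ (the narrow face between the twins), so that whenever an alternating path passes through a level it must cross one of the twin edges and hence visit $f$; a path that turns around and re-traverses that level would therefore have to visit $f$ a second time. Your argument instead introduces a level function, correctly shows that red edges change it by $\pm1$ while blue edges preserve it, and then concludes that a direction reversal forces a strict local extremum at a single face $p_i$ whose two incident path-edges both change the level and are therefore both red. That inference is false. Because blue edges are level-preserving, the level sequence along an alternating path has the shape $d, d', d', d'', d'', \ldots$, and a turn-around occurs across a \emph{plateau}, e.g.\ $\ldots, 3, 4, 4, 3, \ldots$, where the two level-$4$ faces are joined by a blue edge. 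The surrounding edge pattern is red, blue, red, which is perfectly alternating, and neither plateau face has two level-changing edges, so your argument does not exclude this configuration. Ruling it out requires precisely the extra structure the paper invokes: the two blue edges at a level share the middle face $b_d$, and the only blue neighbor of an outer face at that level is $b_d$, so a plateau turn-around forces a repeated visit to some middle face. The level function alone is not enough to conclude.
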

\begin{proof}%[proof of Claim~\ref{clm:chdir}] 
Consider Figure~\ref{fig:ray}~$a)$. For every blue edge exists a blue twin edge, bounding same face $f$. 
It is easy to see that any alternating  dual  path needs to use  exactly one of the twin edges and thus the face $f$. A return  dual path would need to use  this face as well.
 \end{proof}
 
 The following observation is the essence of the construction.
 \begin{observation}\label{clm:raynoReturn}
 	 Any alternating  dual path in a marked slab must only leave to the middle part and cannot  switch to another marked slab.
 \end{observation}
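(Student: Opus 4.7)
My plan is to use the alternation constraint together with the fact that blue edges live exclusively in marked slabs and in the middle part. First, I would establish the following color distribution: by the explicit placement of the blue twins, outside the $k$-gon every blue line sits entirely inside its two opposite marked slabs, so no blue line passes through any unmarked slab $U$. Every edge of a face in $U$ lies on some arrangement line bounding that face, and none of those lines is blue; hence every edge incident to a face of $U$ is red, and in particular $U$ contains no blue edges.

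Armed with this, I would argue by contradiction. Suppose an alternating dual path $P$ travelling in the marked slab $S$ takes an edge $e$ that leaves $S$ without entering the middle part. Then $e$ must lead into one of the two unmarked slabs $U$ adjacent to $S$ (these, together with the middle part, are the only regions bordering $S$ across the arrangement), and by the color distribution above $e$ is red. The alternation rule then forces the next edge of $P$ to be blue; but the endpoint of $e$ is a face $g\in U$, and every edge incident to $g$ is red. Hence $P$ cannot be extended past $g$, and in particular it cannot reach another marked slab through~$U$.

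Consequently, the only way for $P$ to leave $S$ and subsequently continue on to another marked slab is via the middle part, which is precisely the content of the observation. The main delicate point I expect is the claim that every edge incident to a face of $U$ is red; I would want to justify this carefully from the fact that each blue twin is contained in its two opposite marked slabs, so that any potential straddling of dotted slab boundaries by a face does not obscure the color-distribution argument.
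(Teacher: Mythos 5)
Your proposal follows essentially the same strategy as the paper — the separating region between marked slabs is monochromatic red, so the alternation constraint prevents a path from crossing it — but it has a genuine gap exactly at the point you yourself flag as "delicate." You assert that "every edge incident to a face of $U$ is red," but arrangement faces are not aligned with the dotted slab boundaries: a face can perfectly well straddle the dotted line separating a marked slab $S$ from the adjacent unmarked slab $U$, and on the $S$ side it may carry a blue edge (the blue twins run right up to the boundary of $S$). For such a face, your step "every edge incident to $g$ is red" fails, and an alternating path could enter it on a blue edge and leave on a red one. So it is not enough to look at "the first face across the boundary."

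The paper closes exactly this gap using Observation~\ref{clm:slabs}: outside the $k$-gon, a dotted line meets a red line only at a vertex of the red arrangement. Consequently, every face cut by the \emph{middle} dotted line of the unmarked region between two marked slabs is bounded on both sides by the left and right dotted lines, hence lies wholly inside the unmarked region and is therefore all red. This produces a genuine monochromatic \emph{barrier} of faces that a dual path cannot traverse, rather than a statement about the first face beyond the boundary. To complete your argument along the lines you started, replace "the endpoint of $e$ is a face $g\in U$" with: to reach another marked slab, $P$ must cross the middle dotted line, and the face of $P$ that does so is entirely red by Observation~\ref{clm:slabs}, so $P$ cannot continue past it.
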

 \begin{proof}%[proof Claim~\ref{clm:raynoReturn}]
  See Figure~\ref{fig:slabs}~d) and Figure~\ref{fig:ray}~b). Every two neighboring marked slabs are separated by red faces. 
%  This means there is no way from one ray to the other. To see that this is the case in general, we use Claim~\ref{clm:slabs}. 
Every face intersected by the middle dotted line is bounded by the left and right dotted line as no red line can cross a dotted line without crossing another red line at the very same point, by Claim~\ref{clm:slabs}. 
\end{proof}
 
\begin{observation}\label{clm:raylength}
	Any alternating  dual path in a marked slab has length at most $k = 3n$.
\end{observation}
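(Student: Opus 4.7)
The plan is to combine the monotonicity obtained from Observation~\ref{clm:chdir} with the twin-edge constraint that was already used in its proof, and then bound the number of blue edges the path can use via the structure of the B\"or\"oczky arrangement.

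First, by Observation~\ref{clm:chdir} the alternating path inside a marked slab is monotone: it traverses faces in one direction of the natural ordering by distance from the $k$-gon, so I may assume it moves outward. Second, the twin-edge mechanism already used in the proof of Observation~\ref{clm:chdir} shows that for every blue edge on one of the twin lines, its partner edge on the other blue line bounds a common thin face, and no alternating path can use both partners without revisiting that common face. Hence the path uses at most one blue edge per twin pair, so the number of blue edges it contains is at most the number of twin pairs in the slab.

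It therefore suffices to bound the number of twin pairs of blue edges inside the slab, which I would do via Claim~\ref{clm:slabs}: the bounding dotted lines of the slab are crossed by red lines only at points shared with another red line, so the red arrangement inside the slab is highly rigid and governed by the B\"or\"oczky configuration. Each red line that crosses the twin cuts the two blue lines at nearly the same place and so contributes exactly one bounded twin pair; the blue-blue perturbation contributes at most one additional twin pair; and the unbounded far end of the slab contributes only a constant number. A direct count of red lines crossing the slab, using the regular-$k$-gon symmetry, gives at most roughly $k/2$ twin pairs. Since the path is alternating and uses at most one blue edge per twin pair, the number of faces is at most twice the number of blue edges plus a constant, hence at most $k = 3n$.

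The hard part will be making the count exact rather than $O(k)$. The delicate points are the blue-blue crossing introduced by the perturbation, the unbounded faces at the far end of the slab, and the corner faces near the side of the $k$-gon where red lines meet concurrently on the dotted boundary. A clean way to finish is to describe the dual graph restricted to the slab as a caterpillar whose spine consists of exactly the faces a monotone alternating path may visit, and to verify, by direct inspection of the explicit B\"or\"oczky geometry, that this spine has at most $k$ nodes.
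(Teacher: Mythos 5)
Your core count is the same as the paper's, but you reach it by a longer detour. The paper's proof is three sentences: by the regular-$k$-gon symmetry each slab is crossed by exactly $(k-1)/2$ red lines; a monotone alternating path inside the slab crosses each such red line at most once; since the path alternates, this caps its length at roughly $k$. You approach from the blue side — counting twin pairs and then blue edges, one per twin pair — and then translate this into a count of red lines crossing the slab. The translation works because each red line crossing the twins spawns exactly one twin pair, so you end up with the same number $(k-1)/2$, but nothing is gained by the indirection: you re-derive the same symmetry count and then must additionally argue about the perturbation crossing, the slab's far end, and the corner near the $k$-gon, none of which the paper needs to touch. Your invocation of Claim~\ref{clm:slabs} and the B\"or\"oczky rigidity is also superfluous for this particular bound; the paper uses that structural fact only for Observation~\ref{clm:raynoReturn}, not here.

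There is also a genuine incompleteness: you yourself flag that your argument only gives $O(k)$ and propose to finish by exhibiting the slab's dual graph as a caterpillar and counting its spine by ``direct inspection.'' That step is not written, and it is the part that would actually close the proof. The paper avoids this entirely because counting red lines rather than blue twin pairs sidesteps the perturbation and boundary subtleties — each red line either crosses the slab or it does not, so the symmetry count $(k-1)/2$ is exact with no corner cases. If you want to keep your blue-side framing, the clean fix is to note that, since the path alternates, the number of blue edges is within one of the number of red edges, and then apply the paper's red-line count directly rather than re-deriving it through twin pairs.
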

\begin{proof}%[proof of Claim~\ref{clm:raylength}]
	Every red line intersects $k-1$ slabs and we have $2k$ slabs and $k$ red lines.
	As every slab is symmetric to all the others we have $(k-1)/2$ red lines per slab.
	Any alternating  dual path in a marked slab crosses every red line at most once.
	\end{proof}

\begin{observation}\label{clm:midlength}
	Any alternating  dual path in the middle part has length at most $8n$.
\end{observation}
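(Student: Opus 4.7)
The plan is to bound the length of an alternating dual path in the middle part by bounding the number of blue edges such a path can use; since an alternating path of length $L$ has at most $\lceil L/2 \rceil$ blue edges, it suffices to show that at most $4n$ blue edges are used.

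First, I would analyze the local structure induced by each twin pair. The two almost parallel blue lines of a twin pair bound a thin strip inside the central $3n$-gon. A face lying in such a strip has two blue boundary edges, one on each twin, while its other edges are red; in particular, two faces in the same strip that are adjacent to each other are separated by a red edge, not by a blue one.

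Second, using the alternation rule, I would argue that the path interacts with each twin strip in a very restricted way: once the path enters a strip via a blue edge, the next edge must be red (keeping the path inside the strip), and the following edge must be blue (thereby exiting the strip, through the same twin line or the other one). Hence each visit of the path to a given twin strip contributes exactly two blue edges and at most two inner faces.

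Third, by a case analysis using the simplicity of the path together with the geometric arrangement of the $n$ thin strips as near-diameters of the $3n$-gon through opposite marked slabs, I would argue that each twin pair is visited only a bounded number of times, yielding a total of at most $4n$ blue edges and therefore a total path length of at most $8n$.

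The main obstacle will be the third step, bounding the number of visits per twin pair. This requires careful combinatorial and geometric reasoning: the path cannot revisit a face, and re-entering a strip forces a detour which is constrained both by the thinness of the strips and by the specific B\'or\'oczky-type layout of the dotted lines and the red star structure. I expect that handling the order in which strips are encountered, together with the way the path winds through the middle, will be the technically involved part, likely phrased as a charging argument assigning each blue edge of the path to one of the $n$ twin pairs and showing that no pair receives more than four units of charge.
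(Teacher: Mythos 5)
Your plan is structurally the same as the paper's: bound the number of blue crossings used by the path, then use alternation to double. The paper counts per blue line (``every blue line is intersected at most twice,'' $2n$ lines, hence at most $4n$ blue crossings); you count per twin pair, claiming at most four blue crossings per pair, which gives the same $4n$ and the same $8n$ bound.

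The decisive step, however --- your third one, bounding how often the path can visit a given twin strip --- is left as a plan rather than a proof, and the mechanism you gesture at (thinness of the strips, the near-diameter geometry, simplicity of the path) is not what actually makes the bound hold. What does the work is a structural feature you never mention: the middle part contains \emph{monochromatic} red cells (those marked in Figure~\ref{fig:ray}c). An alternating path cannot pass through a monochromatic face, so these cells act as absolute obstacles that separate the twin strips from one another inside the middle region and prevent the path from re-entering a strip repeatedly. The paper's sentence ``Note that the red faces marked in Figure~\ref{fig:ray}c) must be avoided'' is carrying exactly this load. Without it, simplicity of the path does not by itself forbid weaving through the same strip many times in distinct cells, so your proposed charging argument has no closing step. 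Identify the monochromatic obstacle cells and use them to cap the visits per strip at two; your steps 1 and 2 are fine as written.
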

\begin{proof}%[proof of Claim~\ref{clm:midlength}]
	Every blue line is intersected at most twice and we have $2n$ blue lines. 
	Note that the red faces marked in Figure~\ref{fig:ray}c) must be avoided.
\end{proof}
Any  dual path may go through at most two marked slabs and traverses the middle part at most once. Hence, it is of length $\leq 2\cdot3n+8n=14n$. This finishes the proof.
\end{proof}

The parity restriction is only technical we can double any red line or remove any blue line while keeping all crucial properties. 

The idea of the above example cannot be improved. We can neither add more blue lines nor remove any red line without dramatically increase the length of the longest  dual path. In particular we will see later that at least one face with many lines of the other color traversing through it is necessary for a linear bound on the length of the longest alternating dual  path. Therefore we ask the following tantalizing open questions:

\section{Random Coloring}\label{sec:ran}
In this section we consider the following question:
Does every pseudoline arrangement have a coloring of the pseudolines that admits a
'long' alternating path?
We answer this question affirmatively by Theorem~\ref{thm:probLongPath}.

%Let $\mathcal{A}$ be an arrangement of $n$ pseudolines.
%Consider a random coloring of $\mathcal{A}$, i.e., each line is colored red with probability
%$1/2$ and blue otherwise.

\RC*

This implies that each pseudoline arrangement has a coloring that admits an
alternating path of this length.
%\begin{theorem}\label{thm:probLongPath}
%    In a random bicoloring of an arrangement of $n$ lines, there
%    exists an alternating path of length
%    $\Omega \left( {n^2}/{\log n} \right)$ with high probability.
%\end{theorem}

%===========
%\begin{figure}[h]
% \centering
% \includegraphics[width=.85\textwidth]{path2color.pdf}
% \caption{\label{fig:path2color}The definition of
% $\overrightarrow{G}^*(\mathcal{A})$.}
%\end{figure}
%===========

For the proof of Theorem~\ref{thm:probLongPath} we define an 
orientation of the dual graph of 
the arrangement as in~\cite{cellpaths}:
As $G^*(A)$ of the arrangement is bipartite, we
fix a coloring of the faces with black and white,
such that no two adjacent faces obtain the same color.
As shown in the left of Figure~\ref{fig:random}, we direct an edge from a white to
a black vertex if they share a blue edge.
If they share a red edge we direct an edge from the black to the white face. 
We denote this graph by $\overrightarrow{G}^*(\mathcal{A})$.
A directed path in $\overrightarrow{G}^*(\mathcal{A})$ corresponds
to an alternating path in the arrangement.
Note that two intersecting alternating paths can be glued together.
The set of faces that can be reached by a directed path from the face $z$
is denoted by $\reach(z)$.

The proof of the lemma above is based on the fact that the faces incident to a
bichromatic vertex induce a cycle in $\overrightarrow{G}^*(\mathcal{A})$.

\begin{lemma}[\cite{cellpaths}]\label{lem:monochromaticBorder}
    Let $E$ be the set of edges of the arrangement that separate faces
    in $\reach(z)$ from its complement.
    No red edge in $E$ shares a vertex with a blue edge in $E$.
\end{lemma}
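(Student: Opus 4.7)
The plan is to argue by contradiction using the directed $4$-cycle structure around each primal vertex in $\overrightarrow{G}^*(\mathcal{A})$. The starting observation is that at any vertex $v$ of the arrangement, exactly two pseudolines meet, so of the four primal edges incident to $v$, two belong to one pseudoline and two to the other. Hence, going around $v$, the primal edge colors alternate red, blue, red, blue, and the four incident faces (which alternate black/white) form a $4$-cycle $x_1, x_2, x_3, x_4$ in the dual graph. By the orientation rule, each dual edge corresponding to a red primal edge is directed black\,$\to$\,white, and each one corresponding to a blue primal edge is directed white\,$\to$\,black; these directions are consistent around the cycle, so $x_1 \to x_2 \to x_3 \to x_4 \to x_1$ is a directed $4$-cycle in $\overrightarrow{G}^*(\mathcal{A})$. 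Crucially, along this cycle the four dual edges strictly alternate in color (red, blue, red, blue).

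The next ingredient is that $\reach(z)$ is forward-closed: whenever $u \in \reach(z)$ and $u \to w$ is a directed dual edge, then $w \in \reach(z)$ by definition. Consequently, any edge of $E$ (a dual edge separating $\reach(z)$ from its complement) must be oriented from the complement into $\reach(z)$; no edge of $\overrightarrow{G}^*(\mathcal{A})$ can be directed out of $\reach(z)$ into its complement.

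Now suppose, for contradiction, that a red edge $e_r \in E$ and a blue edge $e_b \in E$ share a primal vertex $v$. Their corresponding dual edges lie in the directed $4$-cycle around $v$; since they have different colors, they are \emph{consecutive} in this cycle (the same-colored edges are opposite). Without loss of generality, write the cycle as $x_1 \to x_2 \to x_3 \to x_4 \to x_1$, with $e_r$ corresponding to $x_1 \to x_2$ and $e_b$ to $x_2 \to x_3$. From $x_1 \to x_2 \in E$ the forward-closedness forces $x_1 \notin \reach(z)$ and $x_2 \in \reach(z)$. But from $x_2 \to x_3 \in E$ we similarly need $x_2 \notin \reach(z)$, contradicting the previous line.

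The main (and only) subtlety is verifying that the $4$-cycle around $v$ really is directed and that red/blue dual edges are consecutive rather than opposite in that cycle; once this local picture is established, the contradiction is immediate from the forward-closedness of $\reach(z)$. No global argument, case analysis on bounded/unbounded faces, or appeal to the topology of $E$ is needed.
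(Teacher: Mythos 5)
Your proof is correct and follows exactly the approach the paper sketches (the paper only states the key fact — that the four faces around a bichromatic vertex induce a directed cycle in $\overrightarrow{G}^*(\mathcal{A})$ — and cites \cite{cellpaths} for the rest). Your write-up correctly verifies that this $4$-cycle is consistently oriented, that its dual edges alternate in color, and that forward-closedness of $\reach(z)$ then forbids two consecutive edges of the cycle from both crossing the boundary of $\reach(z)$, which is precisely the intended argument.
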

%
%
%We define the tunnel of width $w$ as
% \[\mathcal{T}_i=\bigcup_{j=iw+1}^{(i+1)w} \, L_j.\]
%For simplicity, we assume that $w$ divides $n$.
%We denote by $l =  n/w \, -1 $ the index of the last tunnel.
%Further, we define the \emph{depth} of a face as the
%minimum number of lines separating a face from an unbounded one.
%The \emph{outer tunnel} $\mathcal{O}^w$ 
%is the set of faces of depth at most $w$.
%We suppress the width $w$ in the notation, 
%it will be chosen later appropriately.
%
We define the \emph{depth} of a face as the
minimum number of lines separating a face from an unbounded one.
The \emph{outer tunnel} $\mathcal{O}^w$ 
is the set of faces of depth at most $w$.

The idea is similar to the uncolored case: We find a path in 
tunnel $T_i^w$ from left to right or from right to left. 
Those paths are glued together in the outer tunnel.
The outer tunnel $\mathcal{O}$ is needed 
to glue those paths together to one.
%-------------
%\begin{figure}[h]
%\centering
% \includegraphics{PathConstruction}
% \caption{Construction of the path by the tunnel paths.}
% \label{fig:pathCon}
%\end{figure}
%-------------

We construct a path in the following way:
Assume there exists directed paths $L$ and $R$ 
in the outer tunnel from some face in $\mathcal{T}^w_0$ 
to $\mathcal{T}^w_l$ as in the middle of Figure~\ref{fig:random}.
The path $L$ crosses the wall of $\mathcal{T}^w_1, \mathcal{T}^w_2 , \, \ldots$.
There are some faces $F$ traversed immediately before and immediately after $L$ crosses a wall.
Denote these faces by $\tilde l_1,\tilde l_2, \, \ldots$ in the order they are traversed by $L$.
Similarly, we define $\tilde r_1,\tilde r_2,\, \ldots$ for the path $R$. 
Note that each $\tilde l_i,\tilde r_i$ is adjacent to the top wall of a tunnel for $i$ odd and 
to the bottom wall for $i$ even.

We define paths $P_i$, if they exist, as follows
\[P_i:=\text {a path in tunnel } \mathcal{T}_i\ \begin{cases}
       \text{from } \tilde l_{2i} \text{ to } \tilde r_{2i+1} & i \text{ even,}\\
       \text{from } \tilde r_{2i} \text{ to } \tilde l_{2i+1}& i \text{ odd.}
      \end{cases}\]
If there is more than one such path we pick an arbitrary one.
Our long path is $P = P_1P_2P_3 \ldots $ as depicted in the middle of Figure~\ref{fig:random}.

It remains to show that 
\begin{enumerate}[itemsep=0pt]
	\item[(i)]\label{item:1} 
	the paths $P_i$ exists with high probability,
	\item[(ii)]\label{item:2} 
	the paths $L$ and  $R$ exist with high probability, and
	\item[(iii)]\label{item:3} 
	the path $P$ has length $\Omega (n^2/\log n)$.
\end{enumerate} 
We first show~(i).
\begin{lemma}\label{lem:tunnelBlocked}
 Assume $L$ and $R$ exist as described above.
 Let $A_i$ be the event that $P_i$ does not exist.   
 Then \[ Pr(A_i)<n^4\ 2^{-w+3}.\]
\end{lemma}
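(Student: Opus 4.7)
The plan is a union bound over candidate ``separating curves.''  If $A_i$ occurs, then within the tunnel $\mathcal{T}_i^w$ the directed reach of $\tilde l_{2i}$ in $\overrightarrow{G}^*(\A)$ fails to contain $\tilde r_{2i+1}$.  Since $\mathcal{T}_i^w$ is a strip bounded by its top and bottom walls, the boundary $\gamma$ of this reach must then contain a connected primal curve that runs from a wall edge on the top wall of $\mathcal{T}_i^w$ to a wall edge on the bottom wall.

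By Lemma~\ref{lem:monochromaticBorder} applied at each interior vertex of $\gamma$, the two boundary edges at that vertex lie on pseudolines of the same color.  A short arrow-direction check at a vertex further rules out the case in which $\gamma$ passes straight through an interior vertex on a single pseudoline: the ``$2$-in/$2$-out adjacent'' configuration at the vertex would force the containing pseudoline to be both red and blue.  Hence $\gamma$ turns at every interior vertex, and chaining the local same-color constraints forces every pseudoline that contributes an edge to $\gamma$ to share a single color (either all red or all blue).

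Next, I would show that $\gamma$ involves at least $w$ distinct pseudolines, whose colors are thus forced to be equal.  Since consecutive primal edges of $\gamma$ differ in level by at most one and since $\gamma$ must descend the $w$ levels from the top wall to the bottom wall of $\mathcal{T}_i^w$, the curve contains at least $w$ primal edges.  Combined with the ``turn at every interior vertex'' property and with the simple-arrangement rule that any two pseudolines cross at exactly one vertex -- so $\gamma$ cannot oscillate indefinitely between a small fixed set of lines without revisiting a vertex -- this has to be promoted to a bound of at least $w$ on the number of \emph{distinct} pseudolines appearing along $\gamma$.  I expect this step, which requires exploiting the level structure of the tunnel rather than the simple crossing-multiplicity bound alone, to be the crux of the argument.

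Finally, for the count: each candidate curve $\gamma$ is over-determined by its pair of endpoint vertices, one on the top wall and one on the bottom wall.  Since each endpoint is the unique intersection of two pseudolines, there are at most $\binom{n}{2}^2 < n^4$ such endpoint pairs.  For a fixed candidate $\gamma$ using $s \ge w$ distinct pseudolines, the probability under a uniformly random bicoloring that all $s$ pseudolines receive the same color is at most $2 \cdot 2^{-s} \le 2^{-w+1}$.  Union-bounding then yields
\[
\Pr(A_i) \;\le\; n^4 \cdot 2^{-w+1} \;<\; n^4 \cdot 2^{-w+3},
\]
as claimed.
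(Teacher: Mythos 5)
Your plan follows the same skeleton as the paper's proof: look at the boundary of $\reach(\tilde l_{2i})$, invoke Lemma~\ref{lem:monochromaticBorder} for monochromaticity, union-bound over the $O(n^4)$ pairs of wall endpoints, and apply a $2 \cdot 2^{-s}$ bound for a monochromatic set. However, the step you flag as the crux --- promoting ``$\gamma$ has $\ge w$ edges'' to ``$\gamma$ uses $\ge w$ \emph{distinct} pseudolines'' --- is indeed a real gap, and you do not close it. Arguing that the curve cannot ``oscillate indefinitely between a small fixed set of lines'' does not directly yield a count of $w$ distinct lines: the boundary could in principle use fewer lines and still descend, so something more must be said.

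The paper sidesteps this entirely by counting a different set of lines. Instead of the lines that appear as edges of $\gamma$, it considers the set $S_{u,v}$ of pseudolines \emph{separating} the two endpoints $u$ (on the top wall) and $v$ (on the bottom wall). Since $u$ and $v$ lie on walls that are $w-2$ levels apart, at least $w-2$ lines separate them, and this holds deterministically, independent of the coloring. Any curve from $u$ to $v$ must cross every such line, and at each crossing the monochromaticity of the boundary (Lemma~\ref{lem:monochromaticBorder}) forces the crossing line to share the boundary color. This fixes your gap and also quietly resolves a second issue in your write-up: you union-bound over pairs of endpoints but then condition on a ``fixed candidate $\gamma$,'' which is itself coloring-dependent. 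Working with the deterministic set $S_{u,v}$ makes the union bound legitimate: for each fixed pair $(u,v)$ one bounds $\Pr(S_{u,v}\text{ monochromatic}) \le 2 \cdot 2^{-(w-2)}$, and sums over the $O(n^4)$ pairs. With that replacement your argument matches the paper's.
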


\begin{proof}
  Assume $i$ is even and there exists no path from $\tilde l_{2i}$ to $\tilde r_{2i+1}$.
  Then consider the boundary of the reachable region $\reach(\tilde l_{2i})$.
  Since there exists a path from $\tilde l_{2i}$ to $\tilde l_{2i+1}$,
  the boundary of $\reach(\tilde l_2i)$ connects the top and bottom wall.
  Let $u$ and $v$ be the points where the boundary 
  touches the upper and lower wall furthest to the right.
  By Lemma~\ref{lem:monochromaticBorder}, the vertices on the boundary of the
  reachable region between $u$ and $v$ are of the same color, see
  Figure~\ref{fig:random} right.
  %----
  \begin{figure}[h]%
	\centering
	\includegraphics{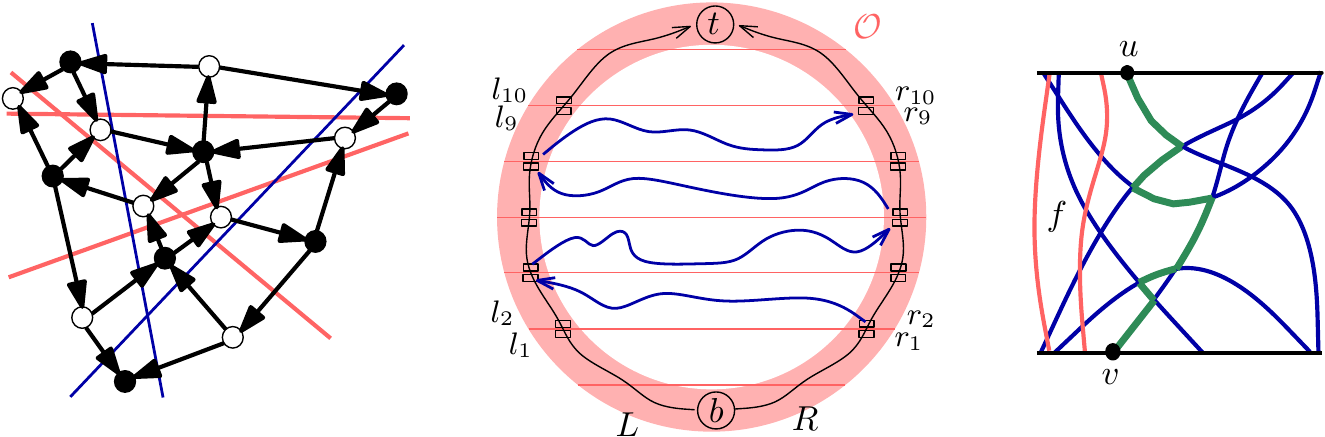}%
	\caption{Left: the definition of $\protect\overrightarrow{G}^*(\mathcal{A})$.
  Middle: construction of the path by tunnel paths.
  Right: the boundary of 
	$reach(f)$ within the tunnel is marked green and is monochromatic.}
	\label{fig:random}%
	\end{figure}
	%----
        
  As $u$ and $v$ are adjacent to faces that are $w-2$ levels apart, the set $S_{u,v}$ of lines
  separating $u$ from $v$ contains at least $w-2$ lines.
  All these lines intersect the boundary between $u$ and $v$,
  and thus all lines are of the same color.
        
  The probability that for two fixed points $x, y$ the 
  set $S_{x,y}$ is monochromatic is upper bounded by ${2}/(2^{w-2})$.

  The complexity of a wall is bounded by $O(n^{2})$,
  thus there are at most $C n^{4}$ pairs of points 
  on walls for some constant $C$. 
  So the probability that a path within the wall is blocked is bounded by
 \begin{equation*}
 Pr(A_i) \leq \sum_{(x,y)}Pr(S_{x,y}\mbox{ is monochromatic})\leq \frac{Cn^{4}}{2^{w-3}}. \qedhere
 \end{equation*}
\end{proof}

A similar proof also works to show that with high
probability the paths $L$ and $R$ exist.

\begin{lemma}\label{lem:circle}
   Let $B$ be the event that one of the paths $L$ or $R$ does not exist.
   Then \[\Prob(B)<n^3\frac{1}{2^{w-1}}+\frac{2}{2^{n/3-w}}.\]
\end{lemma}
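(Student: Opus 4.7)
The plan is to bound $\Pr(L\text{ fails})$ and $\Pr(R\text{ fails})$ separately and then apply the union bound. Since the left and right sides are symmetric, I will focus on $L$.

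Fix a source face $z$ in the left part of $\mathcal{T}^w_0$. By definition, $L$ exists precisely when $\reach(z)$ contains a face in the left part of $\mathcal{T}^w_l$ via a directed path in the outer tunnel. Suppose it does not. Then the boundary $E$ of $\reach(z)$, restricted to the left portion of $\mathcal{O}^w$, separates $z$ from every left face of $\mathcal{T}^w_l$. By Lemma~\ref{lem:monochromaticBorder}, every maximal arc of $E$ between consecutive bichromatic vertices is monochromatic, so the separator consists of monochromatic pieces whose bichromatic endpoints are few.

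I would split into two cases according to the topological shape of the separating arc, mirroring the proof of Lemma~\ref{lem:tunnelBlocked}:

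\textbf{Case 1 (transverse blocker).} A monochromatic arc of $E$ joins a vertex $x$ on the inner wall of $\mathcal{O}^w$ (the wall at depth $w$) to a vertex $y$ on the unbounded boundary. Any such arc crosses at least $w$ pseudolines (the ones separating depth $0$ from depth $w$ near $x$), and all of them must be of the same color. The inner wall has complexity $O(n^2)$ and the unbounded outer boundary has complexity $O(n)$, so there are $O(n^3)$ pairs $(x,y)$ to consider. For a fixed pair, the probability that the pseudolines separating $x$ from $y$ are monochromatic is at most $2\cdot 2^{-w}$. Summing gives a contribution of at most $n^3/2^{w-1}$, matching the first term of the claim.

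\textbf{Case 2 (closed blocker).} No monochromatic arc of $E$ connects the inner wall to the unbounded boundary; instead $E$ encloses $z$ within the outer tunnel, either as a topological circle or as a monochromatic arc with both endpoints on the unbounded boundary that traps $z$. Because $E$ avoids the inner wall and $z$ is near one end of the arrangement, such an enclosing arc is crossed by every pseudoline that starts inside the enclosed region and leaves it; using that the arrangement has $n$ pseudolines and the outer tunnel accounts for at most $O(w)$ of the levels on each side, an elementary counting argument shows that at least $n/3-w$ pseudolines cross $E$ and therefore must all receive the same color. Hence the probability of Case~2 on a fixed side is at most $2^{-(n/3-w)}$. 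Summing over $L$ and $R$ contributes the $2/2^{n/3-w}$ term.

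Combining the two cases for $L$ and using symmetry/union bound for $R$ yields the desired inequality. The main obstacle is the pseudoline count in Case~2: I would verify it carefully by exploiting the fact that the enclosed region must contain $z$ together with levels between $0$ and $w$, while the enclosing arc lives in the outer tunnel on one side, so any pseudoline whose crossing with the left unbounded chain lies in the middle third of the chain is forced to cross $E$. This structural fact is exactly what pins down the $n/3-w$ lower bound on the number of monochromatic crossings.
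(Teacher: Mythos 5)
Your two-case split reproduces exactly the two terms of the lemma, and Case~1 (a monochromatic boundary arc running transversally from the depth-$w$ wall to the unbounded boundary, giving $O(n^{3})$ endpoint pairs times a $2\cdot 2^{-w}$ monochromaticity probability) matches the paper's main blocking-certificate argument. The overall architecture is therefore the same as the paper's; where you diverge is in how you obtain the $n/3-w$ count for Case~2, and that divergence contains a gap.

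The paper gets the $\tfrac{2}{2^{n/3-w}}$ term by invoking two external facts: (i) the directed graph on bicolored faces is connected (from~\cite{cellpaths}), so $\reach(z)$ reaches the region $\mathcal{A}\setminus\mathcal{O}$ unless that region contains no bichromatic vertex at all, and (ii) every simple arrangement has a face of depth $\geq n/3$ (from~\cite{rousseeuw1999depth}), so at least $n/3-w$ pseudolines meet $\mathcal{A}\setminus\mathcal{O}$ and would all have to be the same color. Your Case~2 instead tries to count pseudolines crossing the enclosing arc $E$ directly, and the justification you sketch --- ``any pseudoline whose crossing with the left unbounded chain lies in the middle third of the chain is forced to cross $E$'' --- does not actually hold without the centerpoint/depth fact. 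A priori there is no reason those middle-third pseudolines must cross a closed blocker that sits in the outer tunnel; the correct reason so many lines must cross $E$ is that $E$ separates $z$ (depth $< w$) from a face of depth $\geq n/3$, and any separating curve is crossed by at least $\mathrm{depth}(f^*)-\mathrm{depth}(z)\geq n/3-w$ pseudolines. Without citing the depth-$n/3$ theorem, your ``elementary counting argument'' is incomplete, and indeed you flag this yourself as ``the main obstacle.'' A secondary issue worth addressing is that you treat $E$ as a single monochromatic arc; the monochromatic-border lemma only guarantees this per connected component of $E$, so you should argue (or restrict to the case) that the relevant blocker is connected before concluding that all $n/3-w$ crossing lines share a color.
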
        

\begin{proof}
   First we show that there is an alternating path from an unbounded face
   to the faces of $\mbox{faces}(\mathcal{A}) \setminus \mbox{faces}(\mathcal{O})$
   with high probability if $w=o(n)$.
   This is due to the fact, that the directed graph defined on the cells is
   connected~\cite{cellpaths},
   and $n/3-w$ lines intersect $\mathcal{A}-\mathcal{O}$, which follows from the fact that there
   is a face of depth $n/3$ in each line arrangement, see~\cite{rousseeuw1999depth}.
   The probability that all these lines have the same color is bounded by
   $\frac{2}{2^{n/3-w}}$.
   
   The rest of the argument follows the lines of the proof of Lemma~\ref{lem:tunnelBlocked}:
   We have an alternating path touching both boundaries of our region.
   A blocking boundary of the reachable region to one side (clockwise/counterclockwise)
   produces a certificate of a set of $w-1$ lines, which have to be monochromatic,
   which only happens with low probability. The number of possible endpoints
   of the boundary on is $2n$, on the inner $w$-layer circle at most ${n \choose 2}$.
   So the probability that an outer alternating cycle does not exist is at most
   $$\Prob(B)\leq\frac{2}{2^{n/3-w}}+n^3\cdot\frac{1}{2^{w-1}}.$$
   Note that the lack of such a certificate of $w-1$ monochromatic lines implies the existence of a
   clockwise and an anticlockwise alternating cycle.
\end{proof}

\noindent{\bfseries Proof of Theorem~\ref{thm:probLongPath}.}
We choose $w= 6\lceil\log n\rceil+3$.
% and 
%show that the probability of the existence of
%the path $P$ as described above converges to 1.
%We use the union bound with the probabilities calculated above.
%We denote by $A_i$ the event that the appropriate path in tunnel 
%$\mathcal{T}_i$ does not exist.
Using the union bound, we can bound the probability that at least one of the
paths $P_i$ does not exist from above by
\begin{align*}
 Pr\left(\bigvee A_i \vee B\right) \leq \sum Pr(A_i)+Pr(B) \leq \sum_{i=1}^{{n}/{w}}\frac{Cn^4}{2^{6\log n}}+Pr(B) = o(1).
\end{align*}
%
%Its length is at least 
%  $$2\sum_{i=1}^{\lfloor\frac{n}{2k}\rfloor-1} 2ik-4k=\Omega\left(\frac{n^2}{\log n}\right).$$
%This comes from the following argument, see Figure~\ref{fig:random_lengthtunnel}:
%Let $P_i$ denote the alternating path within tunnel $T_i^k$. Wlog assume $P_i$ connects $p_t^i$ and $q_b^i$. 
%Extend $P_i$ by two shortest path (not necessarily alternating) from $l_{ik}$ to $p_t^i$ and from $q_b^i$ to $r_{(i+1)k-1}$ and denote it by $P_i'$. 
%By Lemma~\ref{lem:length}, $P_i'$ is at least of length $2ik$, if $(i+1)k\leq \frac{n}{2}$.
%The two extending path are each at most of length $2k$: Wlog consider the left extension. Since $p_t^i$ is in $OT_0^k$, we find an unbounded face in $T_i^k\cup T_{i-1}^k$ distance at most $k$. Hence, a shortest path is at most of length $2k$. 
%Hence $|P_i|\geq 2ik-4k$.
%\hfill\ensuremath{\square}\\
%
It remains to show (iii), namely to give a lower bound on the length of $P$.
For this, we extend $P_i$ by adding a shortest path from the start vertex (and
the end vertex) to a left (and right) unbounded face within the tunnel $\mathcal T_i$.
The length of the two extensions can be upper bounded by $2w$ each: at most $w$ faces
are needed to reach any unbounded face and another $w$ to ensure an unbounded face
within tunnel $\mathcal T_i$. 
By Claim~\ref{clm:length}, each path $P_i$ in one of the middle tunnels, i.e., with $l/3\leq i\leq 2l/3$,
contains a linear number of faces:
$$|P_i|+4w\geq 2i(w-1)\geq 2\frac{n}{3w}\frac{w}{2}\geq n/3.$$
Moreover, the number of these paths is at least
$l/3 = \Omega(n/ \log n)$. Consequently, $P$ is of length $\Omega(n^2/\log n)$
and exists with high probability.
This finishes the proof.

\section{Open Problems}
To end, we want to state some interesting open problems:
\begin{itemize}
 \item Find a tight bound for the bicolored case: 
 What is the minimal length of a longest alternating path over all bicolored arrangements with $n$ lines? (The answer lies between $n$ and $2n+1$.)
 \item Does every pseudoline arrangement with $n$ red and $n$
 blue lines have an alternating dual path of  length $\Omega (n^2)$?
% \item Does any pseudoline arrangement with $n$ red, $n$ blue, and $n$ green lines have an alternating dual path of  length $\Omega (n^2)$? 
 \item Is there a coloring for every pseudoline arrangement such that there exist an alternating path of length $\Omega(n^2)$?
\end{itemize}

\paragraph*{Acknowledgment}

We thank Nieke Aerts, Stefan Felsner, Heuna Kim and Piotr Micek for interesting
and helpful discussions on the topic. Further, we want to thank anonymous reviewers 
for their helpful comments regarding the presentation.

\bibliographystyle{plain} 
\bibliography{Lib}

\end{document}